  \tikzstyle{dot}=[circle, inner sep=.4mm, draw=black, outer sep=2mm, fill=black]
  \newcommand{\plotptradius}{0.275}
  \newcommand{\setplotptradius}[1]{\renewcommand{\plotptradius}{#1}}
  \newcommand{\plotpt}[2][] 
  { \fill[#1,radius=\plotptradius] (#2) circle; }
  \newcommand{\plotperm}[2][black]  
  {
    \foreach \y [count=\x] in {#2}
      \plotpt[#1]{\x}{\y};
  }
  \newcommand{\X}{{\bf X}}
  \titleformat{\section}{\Large \sffamily}{\thesection.}{0.5em}{}[\titlerule]
 \titleformat{\subsection}{\large \sffamily}{\thesubsection.}{0.5em}{}
  \definecolor{lightgray}{rgb}{0.8, 0.8, 0.8}
  \definecolor{darkgray}{rgb}{0.7, 0.7, 0.7}
  \definecolor{darkblue}{rgb}{0, 0, .4}
  \definecolor{red}{rgb}{.6, 0, 0}
 \newcommand{\br}{d}
  \DeclareMathOperator{\mj}{\mathsf{mj}}
  \newcommand{\Sn}{\mathfrak{S}_n}
  \renewcommand{\Pr}[1]{\mathbb{P}\left\{#1\right\}}
  \newcommand{\Ex}[1]{\mathbb{E}\left[#1\right]}
 \newcommand{\sO}{\tilde{O}}
  \newtheorem{theorem}{Theorem}
  \newtheorem{lemma}[theorem]{Lemma}
\theoremstyle{definition}
\title{\sffamily The minimum Manhattan distance and minimum jump of permutations}
\author{\sffamily Simon R. Blackburn (corresponding author)\\
\sffamily Department of Mathematics\\
\sffamily Royal Holloway University of London\\
\sffamily Egham, Surrey TW20 0EX\\
{\tt s.blackburn@rhul.ac.uk}\\
\\
\sffamily Cheyne Homberger\\
\sffamily Department of Mathematics and Statistics\\
\sffamily University of Maryland, Baltimore County\\
\sffamily Baltimore, MD 21250, U.S.A.\\
{\tt cheyneh@umbc.edu}\\
\\
\sffamily Peter Winkler\thanks{This author is supported by NSF grants DMS-1162172 and DMS-1600116}\\
\sffamily Department of Mathematics\\
\sffamily Dartmouth, Hanover, NH 03755--3551, U.S.A.\\
{\tt peter.winkler@dartmouth.edu}
}
\date{\sffamily \today}
\begin{document}
\maketitle
\begin{abstract}
Let $\pi$ be a permutation of $\{1,2,\ldots,n\}$. If we identify a permutation with its graph, namely the set of $n$ dots at positions $(i,\pi(i))$, it is natural to consider the minimum $L^1$ (Manhattan) distance, $\br(\pi)$, between any pair of dots. The paper computes the expected value (and higher moments) of $\br(\pi)$ when $n\rightarrow\infty$ and $\pi$ is chosen uniformly, and settles a conjecture of Bevan, Homberger and Tenner (motivated by permutation patterns), showing that when $d$ is fixed and $n\rightarrow\infty$, the probability that $d(\pi)\geq d+2$ tends to $e^{-d^2 - d}$. 

The minimum jump $\mj(\pi)$ of $\pi$, defined by $\mj(\pi)=\min_{1\leq i\leq n-1} |\pi(i+1)-\pi(i)|$, is another natural measure in this context. The paper computes the asymptotic moments of $\mj(\pi)$, and the asymptotic probability that $\mj(\pi)\geq d+1$ for any constant $d$.
\end{abstract}

\paragraph{Keywords:} permutations, asymptotic enumeration

\paragraph{MSC Classification:} 05A05

\newpage

\section{Introduction}

Let $n$ be a positive integer, with $n\geq 2$. We write $[n]$ for the set $\{1,2,\ldots,n\}$, and we write $\Sn$ for the set of all permutations of $[n]$. We write a permutation $\pi \in \mathfrak{S}_n$ in one-line notation, so $\pi = \pi(1) \pi(2) \ldots \pi(n)$. Recall that the \emph{graph} of a permutation $\pi$ is the set of points (\emph{dots}) of the form $(i,\pi(i))$ for $i\in [n]$. Figure~\ref{fig:perm_example} depicts the graph of the permutation $\pi=147258369\in\mathfrak{S}_9$.

The \emph{minimum Manhattan distance} $\br(\pi)$ of a permutation $\pi$ is defined by: 
\begin{equation}
\label{eqn:breadth_defn}
\br(\pi) = \min_{1\leq i<j\leq n}\{|i-j| + |\pi(i) - \pi(j)|\} .
\end{equation}
The permutation $\pi$ in Figure~\ref{fig:perm_example} has $\br(\pi)=4$ (which is, in fact, the largest possible value for a permutation in $\mathfrak{S}_9$). Note that for $n\geq 2$ we have $\br(\pi)\geq 2$ for all $\pi\in\Sn$.

The minimum Manhattan distance is a natural measure when thinking of a permutation as its graph, but was first studied (under the name of the \emph{breadth} of a permutation) by Bevan, Homberger, and Tenner~\cite{cheyne:prolific} in the context of permutation patterns. We now briefly explain this context.

\begin{figure}
  \centering
  \begin{tikzpicture}[scale=.5]
    \setplotptradius{2.5mm}
    \draw[dotted] (1, 1) grid (9, 9);
    \plotpt[black!50]{1,1};
    \plotpt[black!50]{2,4};
    \plotpt[black!50]{3,7};
    \plotpt[black!50]{4,2};
    \plotpt[black!50]{5,5};
    \plotpt[black!50]{6,8};
    \plotpt[black!50]{7,3};
    \plotpt[black!50]{8,6};
    \plotpt[black!50]{9,9};
  \end{tikzpicture}
  \caption{The graph of the permutation $\pi=147258369\in\mathfrak{S}_9$.}
  \label{fig:perm_example}
\end{figure}

Two sequences $a = a_1, a_2, \ldots, a_n$ and $b = b_1, b_2, \ldots, b_n$ of
distinct real numbers are said to have the same \emph{relative order} if $a_i
< a_j$ precisely when $b_i < b_j$. For a given sequence $a$ of length $n$, we
define the \emph{standardization of $a$} to be the unique
sequence on the letters $[n]$ which is in the same
relative order as $a$. The pattern ordering imposes a partial order on the set of all permutations:
For $\pi\in\Sn$ and $\sigma \in \mathfrak{S}_k$, we say that
\emph{$\pi$ contains $\sigma$ as a pattern} (denoted $\sigma \prec \pi$) if
there is a subsequence of $\pi(1) \pi(2) \ldots \pi(n)$ whose
standardization is equal to $\sigma(1) \sigma(2) \dots \sigma(k)$. For
example, $213 \prec 34152$ as seen by the first, third, and fourth entries,
while $21 \not \prec 123456$. 

A permutation $\pi\in\Sn$ can contain
at most $n$ distinct patterns of length $n{-}1$, at most $n(n{-}1)/2$ patterns of
length $n{-}2$, and at most $\binom{n}{d}$ patterns of length $n{-}d$. For an
integer $d$, a permutation is \emph{$d$-prolific} if it contains
precisely $\binom{n}{d}$ distinct patterns of length $n{-}d$. Equivalently, a
permutation is $d$-prolific if every choice of $d$ deletions yields a different
pattern. The notion of a prolific permutation was introduced by
Homberger~\cite{cheyne:fixed_patterns}; see B\'ona~\cite{bona:book} for a general survey of results in permutation patterns. Recently, Bevan, Homberger, and
Tenner~\cite[Theorem~2.22]{cheyne:prolific} established a tight connection between the prolific property and minimum Manhattan distance: a permutation $\pi$ is $d$-prolific if and only if $\br(\pi) \geq d{+}2$. This connection provides another motivation for the study of the minimum Manhattan distance of a permutation.

In a 1944 paper, Wolfowitz~\cite{wolfowitz:runs} proved that a random
permutation has no pairs of dots at distance 2 with probability tending to $e^{-2}$, which
implies that the number of $1$-prolific permutations of length $n$ is
asymptotic to $n!/e^2$.  Bevan et al.\ conjectured that a more general result was true: the proportion of
$d$-prolific permutations should be asymptotically equal to $e^{-d^2-d}$.  The results of random trials seem to confirm this: Figure~\ref{fig:trials} summarises the results of calculating $\br(\pi)$ for $10\,000\,000$ permutations $\pi$, and compares these results to the values predicted by Bevan et al.

\begin{figure}
\[
\begin{array}{l|llll}
n\backslash \text{min distance}&2&3&4&5\\\hline
100&8647017&1330004&22931&48\\
200&8646302&1329900&23751&47\\
400&8646991&1328733&24213&63\\
600&8646229&1329301&24420&50\\
800&8645241&1330035&24660&64\\
1000&8649121&1326350&24466&63\\
2000&8646825&1328391&24722&62\\\hline\hline
\text{Predicted}&8646647&1328565&24726& 61
\end{array}
\]
\caption{The minimum distance of $10\,000\,000$ permutations of degree $n$.}
\label{fig:trials}
\end{figure}

We will prove the following theorem, using a proof framework due to Aspvall and Liang~\cite{aspvall}:

\begin{theorem}
\label{thm:direct}
Let $d$ be a function of $n$ such that $d=O(\log n)$. Define $\lambda=d^2+d$. The probability that a uniformly chosen permutation $\pi\in\Sn$ is $d$-prolific (in other words, the probability that $\br(\pi)\geq d+2$) is $e^{-\lambda}+O((\log n)^6e^{\lambda}/n)$.
\end{theorem}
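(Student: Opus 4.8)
The plan is to follow Aspvall and Liang and recast the statement as an inclusion--exclusion over the \emph{close pairs} of dots of $\pi$, then estimate the resulting binomial moments term by term. Call a triple consisting of two positions $i<j$ and an integer $b\ge 1$ a \emph{potential close pair} if, writing $a:=j-i$, we have $a+b\le d+1$, and say that $\pi$ \emph{realises} it if $|\pi(i)-\pi(j)|=b$. By the equivalence recalled in the introduction --- a permutation is $d$-prolific precisely when its minimum Manhattan distance is at least $d+2$ --- the permutation $\pi$ is $d$-prolific exactly when the number $X=X(\pi)$ of potential close pairs realised by $\pi$ is zero. Since $\sum_{k\ge 0}(-1)^k\binom{m}{k}$ equals $1$ when $m=0$ and $0$ otherwise, taking expectations gives the exact identity
\[
\Pr{\br(\pi)\ge d+2}=\sum_{k\ge 0}(-1)^k\mu_k,\qquad\text{where}\quad \mu_k:=\Ex{\binom{X}{k}}
\]
is the expected number of $k$-element sets of potential close pairs that $\pi$ realises simultaneously. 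Everything then reduces to (i) estimating $\mu_k$, (ii) truncating the series at a well-chosen $K$ and bounding the tail, and (iii) summing.

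For the first moment I would compute $\mu_1$ directly: there are exactly $n-a$ pairs of positions at horizontal distance $a$, and any fixed such pair is realised with probability $\tfrac{2(n-b)}{n(n-1)}$ (two choices for which value is larger, $n-b$ placements), so
\[
\mu_1=\sum_{a=1}^{d}(n-a)\sum_{b=1}^{d+1-a}\frac{2(n-b)}{n(n-1)}=\lambda+O\!\left(\frac{\lambda d}{n}\right),
\]
using $\tfrac{(n-a)(n-b)}{n(n-1)}=1+O(d/n)$ and $\sum_{a=1}^{d}\sum_{b=1}^{d+1-a}2=d(d+1)=\lambda$. For $k\ge 2$, a $k$-element set of potential close pairs imposes a system of prescribed absolute differences on the values at the positions it involves; grouping such $k$-sets by the shape of their \emph{constraint graph} (the graph on those positions whose edges are the position pairs), the contribution of a satisfiable configuration with $p$ positions lying in $c$ components is of order $n^{-(p-c)}$, up to a $(1+O(d/n))$ factor for each of the $k$ constraints. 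Hence the leading contribution comes from the $k$-sets whose position pairs are pairwise disjoint (constraint graph a perfect matching on its $2k$ vertices, so $p-c=k$); counting these against $\tfrac1{k!}\mu_1^k$, and collecting the effect of shared positions, the $(1+O(d/n))$ factors, and the fact that $\pi$ is a permutation rather than a string of independent uniform values, one obtains
\[
\mu_k=\frac{\lambda^k}{k!}\Bigl(1+O\!\bigl(d^{2}k^{2}/n\bigr)\Bigr)\qquad\text{uniformly for }0\le k\le K:=O\bigl((\log n)^2\bigr),
\]
and in particular $\mu_k\le 2\lambda^k/k!$ throughout that range.

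To finish, I would truncate at this $K$. By the Bonferroni inequalities the tail of the series is at most $\mu_{K+1}\le 2\lambda^{K+1}/(K+1)!$ in absolute value, which is a Poisson tail; taking $K=O(\lambda+\log n)$ --- still $O((\log n)^2)$ because $d=O(\log n)$ --- makes this $O(e^{\lambda}/n)$, and for the same $K$ one has $\sum_{k\le K}(-1)^k\lambda^k/k!=e^{-\lambda}+O\bigl(\lambda^{K+1}/(K+1)!\bigr)=e^{-\lambda}+O(e^{\lambda}/n)$. Meanwhile the accumulated per-term errors satisfy $\sum_{k\ge 0}\tfrac{\lambda^k}{k!}\cdot\tfrac{d^{2}k^{2}}{n}=O\bigl(d^{2}\lambda^{2}e^{\lambda}/n\bigr)$. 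Since $\lambda=d^{2}+d$ and $d=O(\log n)$ we have $d^{2}\lambda^{2}=O\bigl((\log n)^6\bigr)$, and combining the three estimates gives $\Pr{\br(\pi)\ge d+2}=e^{-\lambda}+O\bigl((\log n)^6 e^{\lambda}/n\bigr)$, as required.

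The step I expect to be the main obstacle is part (i) for $k\ge 2$: proving \emph{uniformly} in $k$ (up to $K\approx(\log n)^2$) that the degenerate configurations --- $k$-sets whose constraint graph has a repeated position or a cycle, together with the discrepancy caused by working in the permutation model rather than with independent uniform values --- contribute only the claimed $O(d^{2}k^{2}/n)$ relative error, while simultaneously tracking the $(1+O(d/n))$ factor attached to each of the $k$ prescribed differences. This is precisely the portion of the Aspvall--Liang framework that has to be carried over from their setting of consecutive-position forbidden patterns to our richer, two-parameter family of forbidden pairs; the cleanest way to run it is to enumerate the constraint configurations according to the isomorphism type of their constraint graph via an explicit sum (or generating function), which both makes transparent the cancellation that produces $e^{-\lambda}$ in the leading term and isolates the factor $1/n$ responsible for the error estimate.
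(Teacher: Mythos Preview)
Your overall architecture---Bonferroni truncation, the estimate $\mu_k=\tfrac{\lambda^k}{k!}(1+O(d^2k^2/n))$, the choice $K\asymp(\log n)^2$, and the final summation---matches the paper's exactly, and the arithmetic in your part~(iii) is the same as theirs. The genuine difference is in the basic events you run inclusion--exclusion over. You take the indicators of individual close pairs (triples $(i,j,b)$) and work with the binomial moments of their sum $X$; the paper instead sets $\X_i$ to be the indicator that \emph{position} $i$ starts some close pair and runs inclusion--exclusion over subsets $I\subseteq[n-1]$. The payoff of the paper's choice is precisely the obstacle you flag: once one writes $\X_I=1$ by picking, for each $i\in I$, a single witness vector $(\gamma_i,\delta_i)$, the resulting constraint graph on positions has out-degree at most one from each $i\in I$ and is therefore automatically a forest. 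This kills the cycle case outright and makes the upper bound a two-line counting argument (choose $m$ vectors, then values component by component). Their lower bound is also not a graph-enumeration argument but an explicit construction: restrict to ``regular'' $I$ (elements pairwise $>d$ apart and away from the boundary), pick the images $\pi(i_j)$ far apart, place exactly one close partner for each, and fill in the rest---this directly yields the $(1-O(d^2m^2/n))$ factor.

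Your route through constraint-graph isomorphism types is viable, and the heuristic that non-matching configurations cost a factor $\sim d^2k/n$ each is correct, but note that ``enumerate by isomorphism type'' cannot literally be a finite sum: $k$ ranges up to $(\log n)^2$, so the number of types is unbounded. What one actually needs is a uniform bound over all $k$-sets with at least one coincidence or cycle, obtained by (say) peeling off one degenerate edge and comparing to a $(k{-}1)$-set; this is exactly what the paper's forest observation gives for free. If you want to keep your close-pair indexing, the cleanest fix is to imitate the paper: bound $\mu_k$ above by first choosing the $k$ position pairs (with their $b$'s) and observing that any $k$-set with a repeated left endpoint or a cycle either has probability zero or is dominated by forests via the same component-by-component count; for the lower bound, restrict to $k$-sets whose position pairs are pairwise at horizontal distance $>d$ and build the permutation explicitly as the paper does.
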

\noindent (We emphasise that the implied constant in the error term in Theorem~\ref{thm:direct} depends only on the implied constant in our bound for $d$.) We note that when $d$ is constant, $(\log n)^6 e^{\lambda}/n\rightarrow 0$ as $n\rightarrow\infty$ and so the conjecture of Bevan et al.\  follows from Theorem~\ref{thm:direct}.

Wolfowitz's theorem has another natural generalisation. The \emph{minimum jump} $\mj(\pi)$ of a permutation $\pi$ is defined as
\[
\mj(\pi)=\min_{1\leq i\leq n-1} |\pi(i+1)-\pi(i)|.
\]
So $\mj(\pi)\geq 1$ for all $\pi\in\Sn$, and the permutation $\pi$ in Figure~\ref{fig:perm_example} has $\mj(\pi)=3$. Wolfowitz's theorem can be thought of as asserting that the probability that a uniformly chosen permutation $\pi$ has $\mj(\pi)\geq 2$ tends to $e^{-2}$ as $n\rightarrow\infty$. We will prove the more general statement: the probability that a uniformly chosen permutation $\pi$ has $\mj(\pi)\geq k+1$ tends to $e^{-2k}$. The framework of Aspvall and Liang is, in fact, good enough to prove this result, but we work significantly harder (introducing new techniques) to obtain the following stronger theorem, which gives the form of the lower order terms of the asymptotics:

\begin{theorem}
\label{thm:mj_prob}
Let $t$ be a fixed positive integer. There exist functions
\[
p_1(x), p_2(x),\ldots,p_{t-1}(x),q(x)
\]
of $x$, all bounded in absolute value by a polynomial in $x$, with the following property. Let $d$ be a function of $n$ such that $d=O(\log n)$. Let the permutation $\pi\in\Sn$ be chosen uniformly at random. Then
\[
\Pr{\mj(\pi)\geq d+1}=\left(1+\sum_{i=1}^{t-1}p_i(d)/n^{i}\right)e^{-2d}+O(q(d)e^{2d}/n^t).
\]
\end{theorem}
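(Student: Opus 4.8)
The plan is to turn $\Pr{\mj(\pi)\geq d+1}$ into an exact finite inclusion--exclusion sum and then to read off an asymptotic expansion in powers of $1/n$. Call a directed edge $a\to b$ on the vertex set $[n]$ a \emph{bad bond} if $1\leq|a-b|\leq d$; then $\mj(\pi)\geq d+1$ holds exactly when, in the one-line form of $\pi$, no value is immediately followed by a value joined to it by a bad bond. For a set $S$ of bad bonds, the number of $\pi\in\Sn$ that realise \emph{every} bond of $S$ equals $(n-|S|)!$ when $S$, viewed as a digraph on $[n]$, is a vertex-disjoint union of directed paths (each path collapsing to one block of $\pi$), and equals $0$ otherwise. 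Writing $M_{n,j}$ for the number of such ``directed linear forests of bad bonds'' with $j$ edges, inclusion--exclusion gives the exact identity
\[
\Pr{\mj(\pi)\geq d+1}=\sum_{j\geq 0}(-1)^j\frac{M_{n,j}}{(n)_j},\qquad (n)_j:=n(n-1)\cdots(n-j+1),
\]
the sum being finite because $M_{n,j}=0$ once $j\geq n$. One checks $M_{n,0}=1$ and $M_{n,1}=2dn-(d^2+d)$, so already the terms $j\leq 1$ contribute $1-2d+(d^2+d)/n$, which is consistent with the claimed main term $e^{-2d}$ and indicates where the $1/n$ corrections originate.

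The heart of the argument is the asymptotics of $M_{n,j}$. Sorting the bonds of a configuration along the value line $1,2,\ldots,n$ breaks it into \emph{clusters} --- maximal subcollections whose vertices are mutually within distance $d$ --- which are placed essentially independently; a cluster spanning an interval of width $\ell$ can be translated in about $n-\ell$ ways, and the overwhelming majority of clusters are single bonds, each worth the factor $2dn$ up to a bounded translation deficit. I would make this precise either by a cluster/generating-function expansion or, equivalently, by a transfer matrix on the path power $P_n^d$ (a matrix of dimension $4^{O(d)}$ whose entries are polynomials in a bond-counting variable $x$), handling the forbidden directed cycles by a subsidiary inclusion--exclusion since they are a genuinely lower-order effect. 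The outcome should be, for each fixed $t$, an expansion of the shape
\[
\frac{M_{n,j}}{(n)_j}=\frac{(2d)^j}{j!}\left(1+\sum_{i=1}^{t-1}\frac{c_i(j,d)}{n^i}\right)+\frac{E_{n,j}}{n^t},
\]
where each $c_i(j,d)$ is a polynomial in $j$ with coefficients polynomial in $d$ --- the $1/n$ term, for instance, combining the boundary deficit $d^2+d$ seen in $M_{n,1}$, the factor $\binom{j}{2}$ arising from $n^j/(n)_j$, and the two-bond ``path'' and ``$2$-cycle'' clusters --- while $\sum_j|E_{n,j}|(2d)^j/j!$ is bounded by a polynomial in $d$ times $e^{2d}$.

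It then remains to sum over $j$. For the main part I would use the signed Touchard identity $\sum_{j\geq 0}(-1)^j x^j j^k/j!=T_k(-x)e^{-x}$ with $T_k$ a polynomial of degree $k$, so that $\sum_j(-1)^j\frac{(2d)^j}{j!}c_i(j,d)=p_i(d)e^{-2d}$ for polynomials $p_i$ independent of $n$, and the error assembles to the stated $O(q(d)e^{2d}/n^t)$. For the tail, the crude uniform bound $M_{n,j}\leq(2dn)^j/j!$ (pick the $j$ bonds in some order, then divide by $j!$) gives $M_{n,j}/(n)_j\leq(4d)^j/j!$ for $j\leq n/2$, and an even cruder estimate handles $j>n/2$; hence the contribution of $j$ beyond a suitable multiple of $\log n$ is negligible and the finitely many terms of the previous expansion may be truncated safely. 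The hypothesis $d=O(\log n)$ is precisely what keeps $e^{2d}/n^t\to 0$ and drives these tail estimates.

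I expect the main obstacle to be the middle step: proving the expansion of $M_{n,j}$ with error terms that are \emph{uniform} both in $j$ (which ranges up to order $\log n$, not over a fixed finite set) and in $d=O(\log n)$, so that a ``$1/n^i$'' correction is genuinely of that order rather than, say, $d/n^i$ in a place where that distinction matters. The boundary of $[n]$, where a value has fewer than $2d$ bad partners, is not a negligible nuisance but the actual source of the $1/n$ corrections, so it must be tracked exactly; and clusters of two or more bonds --- including the directed cycles that must be excluded --- have to be enumerated precisely enough to pin down $c_1,\ldots,c_{t-1}$ while still being bounded tightly enough to kill the tail. This two-sided demand --- exact leading behaviour together with tight uniform tail bounds, on the same quantities --- is where techniques beyond the Aspvall--Liang framework will be required.
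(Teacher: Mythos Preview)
Your outer architecture is exactly the paper's. In fact your two setups coincide: if $B(\pi)\subseteq[n-1]$ is the set of positions with a small jump and $B'(\pi)$ the set of realised bad bonds, then $|B(\pi)|=|B'(\pi)|$ and any subset of $B'(\pi)$ is automatically a directed linear forest, so
\[
M_{n,j}\,(n-j)!\;=\;\sum_{\substack{S\text{ lin.\ forest}\\|S|=j}}|\{\pi:S\subseteq B'(\pi)\}|\;=\;\sum_{\pi}\binom{|B'(\pi)|}{j}\;=\;\sum_{\pi}\binom{|B(\pi)|}{j}\;=\;n!\,S_j,
\]
whence $M_{n,j}/(n)_j=S_j$ on the nose. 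Your summation step via the signed Touchard identity is likewise the paper's argument (its equations for $\sum_m(-1)^m(2d)^m\overline{p}_i/m!$ and the tail bound), and your crude bound $M_{n,j}\leq(2dn)^j/j!$ matches the paper's use of Bonferroni with $r\asymp\log n$ to kill the tail.

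The substantive divergence, and the place where your proposal is still only a sketch, is the expansion of $M_{n,j}/(n)_j$ (equivalently $S_m$) with coefficients that are genuine polynomials in $j$ uniformly in $d=O(\log n)$. The paper does \emph{not} attack this by a cluster expansion on the value line or a transfer matrix. Instead it stays on the position side: it stratifies the sets $I\subseteq[n-1]$ contributing to $S_m$ by their \emph{type} $\lambda$, the partition recording the run lengths of $I$ minus one, shows that only finitely many types (those with $|\lambda|<t$) matter for the first $t$ terms, and then for each fixed $\lambda$ reduces $\Ex{\X_I}$ to a count $\nu_I=Z^*(\Gamma)$ of distinct integers with prescribed small differences. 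That count is handled by a second inclusion--exclusion (adding ``blue'' equality-edges to a graph $\Gamma$ whose red edges encode the required small differences), and the crucial structural fact is that $\Gamma$ decomposes as a \emph{bounded} piece $\Gamma_1$ (size depending only on $t$ and $\lambda$) together with $m'$ disjoint red edges, which is what forces the resulting coefficients to be polynomials in $m$ with $d$-polynomial coefficients (the paper's ``$d$-SP($m$)'' property). That polynomiality is not cosmetic: it is precisely what allows the exponential-polynomial identity to collapse the sum over $m$ into $p_i(d)e^{-2d}$.

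Of your two suggested routes, the cluster expansion is morally the same idea in dual coordinates and could presumably be made to work, but you would still need an analogue of the ``bounded piece plus $m'$ isolated edges'' decomposition to get polynomial dependence on $j$; your description of clusters as value-intervals of width $\leq d$ does not yet give this. The transfer-matrix route looks problematic as stated: a matrix of dimension $4^{O(d)}$ has size $n^{O(1)}$ when $d\asymp\log n$, so extracting an expansion in $1/n$ with error terms uniform in $d$ from its powers is not obviously easier than the original problem.
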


The extra asymptotic information from Theorem~\ref{thm:mj_prob} is a key ingredient in the proofs of Theorems~\ref{thm:expectedbreadth} and~\ref{thm:expectedjump} below.

Let $\mathbf{Y}$ be the integer-valued random variable where, for any non-negative integer $d$,
\[
\Pr{\mathbf{Y}\geq d+2}=e^{-d^2-d}.
\]
So for $d\in\mathbb{Z}$
\[
\Pr{\mathbf{Y}=d+2}=\begin{cases} 
0&\text{when }d<0\\
e^{-d^2-d}-e^{-(d+1)^2-(d+1)}&\text{when }d\geq 0.
\end{cases}
\]
For an integer $n$ with $n\geq 2$, let $\mathbf{Y}_n$ be the random variable $\br(\pi)$ where $\pi\in\Sn$ is chosen uniformly. Theorem~\ref{thm:direct} shows that the sequence $\mathbf{Y}_2,\mathbf{Y}_3,\ldots$ of random variables converges in distribution to $\mathbf{Y}$. In fact, more is true:

\begin{theorem}
\label{thm:expectedbreadth}
Let $a$ be a fixed non-negative integer. Let the random variable $\mathbf{Y}$ and the random variables $\mathbf{Y}_n$ be defined as above. The $a$th moment of the sequence $\mathbf{Y}_2,\mathbf{Y}_3,\ldots$ converges to the $a$th moment of $\mathbf{Y}$. In particular, the following is true. 
Let $\pi\in\Sn$ be chosen uniformly. The expected value of the minimum Manhattan distance $\br(\pi)$ of $\pi$ tends to 
  \[ 1 + \sum_{d=0}^\infty e^{-d^2-d} \approx 2.1378201816868795778 \]
  as $n\rightarrow\infty$.
\end{theorem}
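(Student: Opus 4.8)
The plan is to deduce Theorem~\ref{thm:expectedbreadth} from the distributional convergence already packaged in Theorem~\ref{thm:direct}, together with the quantitative error term it provides, which is precisely what is needed to upgrade convergence in distribution to convergence of moments. Recall that for a non-negative integer-valued random variable $\mathbf{Z}$ one has the tail formula for the $a$th moment,
\[
\Ex{\mathbf{Z}^a}=\sum_{m\geq 1}\bigl(m^a-(m-1)^a\bigr)\Pr{\mathbf{Z}\geq m},
\]
so it suffices to control $\Pr{\mathbf{Y}_n\geq m}$ for each $m$ and to sum against the polynomially growing weights $m^a-(m-1)^a=O(m^{a-1})$. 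Writing $m=d+2$, Theorem~\ref{thm:direct} gives $\Pr{\mathbf{Y}_n\geq d+2}=e^{-(d^2+d)}+O((\log n)^6 e^{d^2+d}/n)$ uniformly for $d=O(\log n)$, and for the remaining range $d$ larger than a suitable constant times $\log n$ we will use the crude but sufficient bound that $\Pr{\mathbf{Y}_n\geq d+2}=0$ once $d+2$ exceeds the maximum possible breadth of a permutation in $\Sn$ (which is $O(\sqrt n)$), or more simply a union bound showing this tail probability is superpolynomially small there.

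The argument then splits the tail sum at a threshold $d_0=c\log n$ for a small constant $c$ to be chosen. On the main range $0\leq d\leq d_0$ I would write
\[
\sum_{d=0}^{d_0}\bigl((d+2)^a-(d+1)^a\bigr)\Pr{\mathbf{Y}_n\geq d+2}
=\sum_{d=0}^{d_0}\bigl((d+2)^a-(d+1)^a\bigr)e^{-(d^2+d)}+E_n,
\]
where the error $E_n$ is bounded by $O\!\left(\frac{(\log n)^6}{n}\sum_{d=0}^{d_0}(d+1)^{a-1}e^{d^2+d}\right)$; the key point is that $e^{d^2+d}\leq e^{d_0^2+d_0}\leq n^{c^2+o(1)}$ by the choice of $d\leq d_0=c\log n$, so taking $c$ small enough (e.g. $c^2<1/2$) makes $E_n=O((\log n)^{a+6}\, n^{c^2-1})\to 0$. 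Meanwhile $\sum_{d=0}^{d_0}\bigl((d+2)^a-(d+1)^a\bigr)e^{-(d^2+d)}$ converges, as $n\to\infty$, to $\sum_{d=0}^{\infty}\bigl((d+2)^a-(d+1)^a\bigr)e^{-(d^2+d)}=\Ex{\mathbf{Y}^a}$, since the tail $\sum_{d>d_0}(d+1)^{a-1}e^{-(d^2+d)}$ is negligible. On the far range $d>d_0$, both $\Pr{\mathbf{Y}_n\geq d+2}$ and the corresponding terms for $\mathbf{Y}$ contribute a vanishing amount: for $\mathbf{Y}$ this is immediate from the Gaussian-type decay $e^{-(d^2+d)}$, and for $\mathbf{Y}_n$ I would use that $\br(\pi)\geq d+2$ forces all $\binom{n}{2}$ pairs of dots to be at $L^1$-distance at least $d+2$, which by a first-moment (union bound) estimate on the number of pairs within distance $d+2$ already fails with probability $1-o(1)$ once $d$ grows like $\log n$; quantitatively this tail is at most $n^{-\omega(1)}$ there, so it is dominated by the threshold contributions. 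Combining the three estimates gives $\Ex{\mathbf{Y}_n^a}\to\Ex{\mathbf{Y}^a}$.

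For the "in particular" statement, specialize to $a=1$: $\Ex{\mathbf{Y}_n}=\sum_{m\geq 1}\Pr{\mathbf{Y}_n\geq m}$, and since $\Pr{\mathbf{Y}_n\geq 1}=\Pr{\mathbf{Y}_n\geq 2}=1$ (every permutation of $[n]$ with $n\geq 2$ has breadth at least $2$) while $\Pr{\mathbf{Y}_n\geq d+2}\to e^{-(d^2+d)}$ for $d\geq 0$, the above gives $\Ex{\mathbf{Y}_n}\to 1+\sum_{d=0}^{\infty}e^{-(d^2+d)}$, which is the stated constant (its decimal value follows by direct numerical evaluation of the rapidly converging series). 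The main obstacle, and the only place any real care is required, is the interchange of limit and sum: the error term in Theorem~\ref{thm:direct} is of size $e^{\lambda}/n$ up to logarithmic factors and hence \emph{grows} with $d$, so one cannot simply sum it over all $d$. Choosing the cutoff $d_0=c\log n$ with $c$ small is exactly what balances the two competing effects — the error term $e^{d^2+d}/n$ stays $o(1/\mathrm{poly})$ below the cutoff, and the genuine tail of the limiting distribution is doubly-exponentially small above it — and verifying that both tails are controlled there (in particular supplying the crude large-$d$ bound on $\Pr{\mathbf{Y}_n\geq d+2}$ via a union bound on close pairs of dots) is the technical heart of the argument.
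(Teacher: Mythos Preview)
Your overall strategy (tail formula plus a cutoff) is the right one, but the proposal contains two genuine errors that make the argument fail as written.

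\textbf{The cutoff computation is wrong.} You set $d_0=c\log n$ and then assert $e^{d_0^2+d_0}\le n^{c^2+o(1)}$. But $d_0^2=c^2(\log n)^2$, so $e^{d_0^2}=\exp\bigl(c^2(\log n)^2\bigr)=n^{c^2\log n}$, which grows faster than any power of $n$ no matter how small $c>0$ is. Hence your error term $E_n$ does not tend to~$0$. The error bound in Theorem~\ref{thm:direct} is only useful while $d^2+d<\log n$, so the natural first cutoff is of order $\sqrt{\log n}$, not $\log n$. The paper accordingly takes $\kappa=\lceil\tfrac12\sqrt{\log n}\rceil$ for the range where Theorem~\ref{thm:direct} is applied directly.

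\textbf{The far-tail ``union bound'' is in the wrong direction and the claimed decay is false.} A first-moment/union bound on the number $N$ of close pairs yields $\Pr{N\ge 1}\le\Ex{N}$, i.e.\ a \emph{lower} bound on $\Pr{N=0}=\Pr{\br(\pi)\ge d+2}$, which is the opposite of what you need. Nor is the tail $n^{-\omega(1)}$ once $d$ is of order $\log n$: heuristically $\Pr{\br(\pi)\ge d+2}\le\Pr{\mj(\pi)\ge d+1}\approx e^{-2d}$, which for $d=c\log n$ is only $n^{-2c}$, a fixed polynomial rate. Since the far-tail sum has about $\sqrt n$ terms each carrying weight $O(n^{(a-1)/2})$, a merely polynomial bound obtained from Theorem~\ref{thm:direct} and monotonicity is not enough for $a\ge 1$.

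This is exactly why the paper does \emph{not} rely on Theorem~\ref{thm:direct} alone (see the remark following the statement of Theorem~\ref{thm:expectedbreadth}). The paper splits into three ranges with thresholds $\kappa\asymp\sqrt{\log n}$ and $\kappa'\asymp\log n$: on $[0,\kappa]$ it uses Theorem~\ref{thm:direct}; on $(\kappa,\kappa']$ it uses monotonicity of the tail together with the bound $F_{\kappa+2}(n)=o(n^{-1/8})$; and on $(\kappa',\sqrt{2n}\,]$ it invokes the inequality $\br(\pi)\le\mj(\pi)+1$ and the sharp expansion of Theorem~\ref{thm:mj_prob} (taking $t$ large) to get $F_{\kappa'+2}(n)=O(n^{-(u+2)})$, which beats the $n^{(u+1)/2}$ coming from the number and size of terms. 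Your proposal needs to replace the faulty cutoff and the ``union bound'' step by this use of Theorem~\ref{thm:mj_prob}; without it the argument does not close.
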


Note that the information on the asymptotic distribution of $\br(\pi)$ we establish in Theorem~\ref{thm:direct} is not, by itself, sufficient to find the expected value of $\br(\pi)$, as error terms are not tight enough when $\br(\pi)$ is large. (Indeed, Theorem~\ref{thm:direct} is vacuous when $d\sim\log n$, for example.) So instead we use the better asymptotics we obtain for $\mj(\pi)$, together with the obvious fact that $\mj(\pi)\leq \br(\pi)+1$, to provide sufficiently good approximations when $d$ is large. 

A similar result holds for the distribution of the minimum jump of a permutation. Define $\mathbf{Z}$ to be the integer valued random variable such that $\Pr{\mathbf{Z}\geq d+1}=e^{-2d}$ for any non-negative integer $d$. For an integer $n$, let $\mathbf{Z}_n$ be the random variable $\mj(\pi)$ where $\pi\in\Sn$ is chosen uniformly. Theorem~\ref{thm:mj_prob} shows that the sequence $\mathbf{Z}_n$ of random variables converges in distribution to $\mathbf{Z}$ as $n\rightarrow\infty$. The following theorem is the analogue of Theorem~\ref{thm:expectedbreadth}: 

\begin{theorem}
\label{thm:expectedjump}
Let $a$ be a fixed non-negative integer. Let the random variable $\mathbf{Z}$ and the random variables $\mathbf{Z}_n$ be defined as above. The $a$th moment of the sequence $\mathbf{Z}_1,\mathbf{Z}_2,\ldots$ converges to the $a$th moment of $\mathbf{Z}$. In particular, the following is true. 
Let $\pi\in\Sn$ be chosen uniformly. The expected value of the minimum jump $\mj(\pi)$ of $\pi$ tends to 
  \[ \sum_{d=0}^\infty e^{-2d} \approx 1.1565176427496656518 \]
  as $n\rightarrow\infty$.
\end{theorem}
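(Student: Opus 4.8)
The plan is to reduce the moment statement to one about tail probabilities and then let Theorem~\ref{thm:mj_prob} do the work. Since $1\le\mj(\pi)\le n-1$ for every $\pi\in\Sn$, and $0\le(d+1)^a-d^a\le a(d+1)^{a-1}$, Abel summation gives the identities
\[
\Ex{\mathbf{Z}_n^a}=\sum_{d=0}^{n-2}\bigl((d+1)^a-d^a\bigr)\,\Pr{\mj(\pi)\ge d+1}, \qquad \Ex{\mathbf{Z}^a}=\sum_{d=0}^{\infty}\bigl((d+1)^a-d^a\bigr)e^{-2d},
\]
the second series converging. The cases $a=0$ and $a=1$ are immediate (the latter producing the displayed value $\sum_{d\ge0}e^{-2d}=(1-e^{-2})^{-1}$), so I may assume $a\ge1$ and must show $\Ex{\mathbf{Z}_n^a}\to\Ex{\mathbf{Z}^a}$.

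I would then fix a constant $C$ with $2C>a+1$ (say $C=a+1$), set the cutoff $D=D(n)=\lfloor C\log n\rfloor$, apply Theorem~\ref{thm:mj_prob} with $t=4C$ to obtain polynomially bounded functions $p_1,\dots,p_{t-1},q$, and fix a constant $c$ so that $(d+1)^a-d^a$, $|p_1(d)|,\dots,|p_{t-1}(d)|$ and $|q(d)|$ are all at most $(1+d)^c$ for all $d\ge0$. Split the first sum at $d=D$. On the low range $0\le d\le D$ we have $d=O(\log n)$, so Theorem~\ref{thm:mj_prob} applies term by term and expresses the low-range sum $\sum_{d=0}^{D}\bigl((d+1)^a-d^a\bigr)\Pr{\mj(\pi)\ge d+1}$ as the truncated series $\sum_{d=0}^{D}\bigl((d+1)^a-d^a\bigr)e^{-2d}$, plus $\sum_{i=1}^{t-1}n^{-i}\sum_{d=0}^{D}\bigl((d+1)^a-d^a\bigr)p_i(d)e^{-2d}$, plus an error bounded by $\sum_{d=0}^{D}(1+d)^{2c}\,O(e^{2d}/n^t)$. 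The truncated series differs from $\Ex{\mathbf{Z}^a}$ by $\sum_{d>D}\bigl((d+1)^a-d^a\bigr)e^{-2d}=O((\log n)^{a-1}n^{-2C})\to0$; each inner sum in the $p_i$ term is bounded by the convergent series $\sum_{d\ge0}(1+d)^{2c}e^{-2d}=O(1)$, so that term is $O(n^{-1})$; and, since $e^{2d}\le e^{2D}\le n^{2C}$ throughout the low range, the error is $O((\log n)^{2c+1}n^{2C-t})=o(1)$ as $t=4C>2C$. Hence the low range contributes $\Ex{\mathbf{Z}^a}+o(1)$.

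For the high range $D<d\le n-2$ I would use nothing but monotonicity of $d\mapsto\Pr{\mj(\pi)\ge d+1}$ together with the single estimate obtained by applying Theorem~\ref{thm:mj_prob} at $d=D$, namely
\[
\Pr{\mj(\pi)\ge D+1}=\bigl(1+O((\log n)^{c}/n)\bigr)e^{-2D}+O\bigl((1+D)^{c}e^{2D}/n^{t}\bigr)=O\bigl((\log n)^{c}\,n^{-2C}\bigr),
\]
using $e^{-2D}\le e^{2}n^{-2C}$, $e^{2D}\le n^{2C}$ and $t=4C$. Since there are fewer than $n$ indices $d$ in the high range and $(d+1)^a-d^a<an^{a-1}$ for each of them, the high range contributes at most $n\cdot an^{a-1}\cdot O((\log n)^{c}n^{-2C})=O((\log n)^{c}\,n^{\,a+1-2C})$, which tends to $0$ precisely because $2C>a+1$. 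Combining the two ranges yields $\Ex{\mathbf{Z}_n^a}\to\Ex{\mathbf{Z}^a}$, which is the theorem; specialising to $a=1$ gives the stated limit for $\Ex{\mj(\pi)}$.

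I do not expect a genuine obstacle beyond careful bookkeeping of the parameters $C$ and $t$; the real content is Theorem~\ref{thm:mj_prob}. The one place that needs thought is the ``middle range'' $\log n\ll d\ll\sqrt n$, where Theorem~\ref{thm:mj_prob} is no longer available: a naive step-by-step placement argument bounds $\Pr{\mj(\pi)\ge d+1}$ only by roughly $e^{-\Omega(d^2/n)}$ there, which is far too weak to control the moment sum directly. The plan avoids that range entirely — because $a$ is fixed, the cutoff constant $C$ (and hence the accuracy parameter $t$) may be taken as large as we please, so the crude monotone bound inherited from the single value $d=D=\lfloor C\log n\rfloor$ already dominates the trivial factor $n^{a+1}$ coming from the number and size of the high-range terms.
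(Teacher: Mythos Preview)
Your proposal is correct and follows essentially the same route as the paper: both write the $a$th moment via Abel summation as $\sum_d\bigl((d+1)^a-d^a\bigr)\Pr{\mj(\pi)\ge d+1}$, split the sum at a cutoff of order $(\text{const})\log n$, apply Theorem~\ref{thm:mj_prob} with a large fixed $t$ on the low range, and control the high range by monotonicity together with the single estimate at the cutoff. The only cosmetic differences are that the paper first establishes the auxiliary claim for $\sum_d(d+1)^uG_{d+1}(n)$ and then assembles the moment, and there is a harmless arithmetic slip in your high-range exponent ($n\cdot an^{a-1}=an^{a}$, so the bound is $O((\log n)^c n^{\,a-2C})$ rather than $n^{\,a+1-2C}$, which only strengthens your conclusion).
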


Besides playing a role in the study of the asymptotic moments of $\br(\pi)$, the study of minimum jumps is also useful to analyse an efficient algorithm to compute the minimum distance of a permutation $\pi$, which we describe as follows.

The naive algorithm to compute the minimum distance of a permutation takes $O(n^2)$ arithmetic operations, using~\eqref{eqn:breadth_defn} in a straightforward way. We may obtain an $O(n^{1.5})$ algorithm by
making use of the fact~\cite{cheyne:prolific} that 
$\br(\pi)\leq y+2$, where $y$ is the largest integer such that $y^2/2+2y+1\leq n$. Since pairs of dots at horizontal distance more than $y-1$ cannot contribute to the minimum in~\eqref{eqn:breadth_defn}, we may compute $\br(\pi)$ as
\begin{equation}
\label{eqn:better_formula}
\br(\pi)=\min_{\substack{1\leq i<j\leq n\\|i-j|< y}} |i-j|+|\pi(i)-\pi(j)|.
\end{equation}
Because $y=O(\sqrt{n})$, we only examine $O(n^{1.5})$ pairs $(i,j)$ in~\eqref{eqn:better_formula} and so we do indeed obtain an algorithm with $O(n^{1.5})$ complexity. A better algorithm has the same worst case complexity, but has an expected complexity of $O(n)$ when the permutation $\pi$ is chosen uniformly at random. The algorithm first computes $\mj(\pi)$, which can straightforwardly be done using $O(n)$ operations. Since $\mj(\pi)+1\geq \br(\pi)$, we may compute $\br(\pi)$ using~\eqref{eqn:better_formula} by setting $y=\mj(\pi)+1$ if this produces a smaller value for $y$. Theorem~\ref{thm:expectedjump} shows that the expected value of $\mj(\pi)$ is constant (and, in fact, small), and so this algorithm has $O(n)$ complexity on average. This algorithm performs well in practice, and was used to produce the data in Figure~\ref{fig:trials}.

The structure of the remainder of the paper is as follows. In Section~\ref{sec:sketch}, we provide an overview of the proofs in the paper.  We prove Theorem~\ref{thm:direct} in Section~\ref{sec:direct}. In Section~\ref{sec:jumps} we establish Theorem~\ref{thm:mj_prob} modulo a counting lemma, which we prove in Section~\ref{sec:sets}. Finally, in Section~\ref{sec:expectation}, we use Theorems~\ref{thm:direct} and~\ref{thm:mj_prob} to establish Theorems~\ref{thm:expectedbreadth} and~\ref{thm:expectedjump}.

\section{An overview of the proofs}
\label{sec:sketch}

Fix a positive integer $n$, and let $\pi$ be a random permutation of length $n$ chosen uniformly
from $\mathfrak{S}_n$. For an integer $d$ with $d<n$, let $\lambda = d(d{+}1)$. To prove Theorem~\ref{thm:direct}, we are interested in estimating the probability that $\br(\pi)\geq d+2$.

There are approximately $n^2\lambda$ positions $(x,y),(x',y')\in [n]\times [n]$ with $x<x'$ and $y\not=y'$ such that $|x-x'|+|y-y'|\leq d+1$. To see this, note that there are $n^2$ choices for $(x,y)$ and then, provided $x$ and $y$ are not too close to $n$, there are $2\binom{d+1}{2} =\lambda$ choices for $(x',y')$ (see Figure~\ref{fig:bad_pairs} below, where the hollow dot is the position $(x,y)$ and the filled dots are the potential positions for $(x',y')$). A position in $[n]\times [n]$ is occupied by a dot in the graph of $\pi$ with probability $1/n$, so the probability of a `bad' event, that both of $(x,y)$ and $(x',y')$ are occupied by a dot of $\pi$, should be approximately $1/n^2$. So it is reasonable to assume that the expected number of bad events should be close to $\lambda$, and that the distribution of the number of bad events should be approximately Poisson with mean $\lambda$. The probability that $\br(\pi)\geq d+2$ is exactly the probability that there are no bad events, and so this leads to the belief that the probability that $\br(\pi)\geq d+2$ should be $e^{-\lambda}$. The problem with this approach is that the bad events are not independent, and so this argument cannot be made rigorous as it stands. So we proceed as follows. 

For
a permutation $\pi$ of length $n$ and for two indices $i,j \in [n]$, we define
their \emph{distance} to be 
\[ \delta_\pi(i, j) = |i-j| + |\pi(i) - \pi(j)|. \]
We say that $i,j$ form a \emph{close pair} if $d_\pi(i,j) < d{+}2$. To prove Theorem~\ref{thm:direct}, we wish to determine the probability that $\pi$ has no close pairs. 

For each $i \in [n-1]$, define $\X_i$ to be indicator variable for the event that
the index $i$ is the first element of a close pair, i.e. the event that $(i,j)$ is a close pair
for some $j$.   (For $i<n-d$ the expected value of $\X_i$ should be approximately $\lambda/n$, since once $\pi(i)$ is fixed there are $\lambda$ positions where a dot would lead to a close pair: see Figure~\ref{fig:bad_pairs}.) For $I\subseteq [n-1]$, define $\X_I=\prod_{i\in I} \X_i$. For an integer $m$, define $S_m=\sum \Ex{\X_I}$, where the sum is over all subsets $I$ of cardinality $m$. The principle of inclusion-exclusion shows that the probability that all the variables $\X_i$ are zero (which is the probability that $\br(\pi)\geq d+2$) is $\sum_{m=0}^{n-1}(-1)^mS_m$. We will show that $S_m\approx \lambda^m/m!$ by considering terms $\Ex{\X_I}$ where $I$ is \emph{regular}, namely a subset $I\subseteq [n-d]$ such that $|i-i'|>d$ for all $i,i'\in I$, separately. We provide good estimates for $\Ex{\X_I}$ when $I$ is regular, and then show that $S_m$ is dominated by these values (as there are few irregular sets, and since $\Ex{\X_I}$ is never too large). This will establish Theorem~\ref{thm:direct}.
  
\begin{figure}
  \centering
  \begin{tikzpicture}[scale=.5]
    \setplotptradius{2.5mm}
    \draw[dotted] (-1, -3) grid (4, 3);
    \node[circle, fill=white, draw=black,inner sep=0, minimum size=\plotptradius] at (0,0) {};
    \plotpt[black!50]{1,1};
    \plotpt[black!50]{1,-1};
    \plotpt[black!50]{1,2};
    \plotpt[black!50]{1,-2};
    \plotpt[black!50]{2,1};
    \plotpt[black!50]{2,-1};
    \plotpt[black!50]{1,3};
    \plotpt[black!50]{1,-3};
    \plotpt[black!50]{2,2};
    \plotpt[black!50]{2,-2};
    \plotpt[black!50]{3,1};
    \plotpt[black!50]{3,-1};
  \end{tikzpicture}
  \caption{The filled dots mark the $2\binom{4}{2} = 12$ potential entries
  which will lead to the hollow dot starting a close pair with $d=3$.}
  \label{fig:bad_pairs}
\end{figure}

To prove Theorem~\ref{thm:mj_prob} we take a similar approach, but the random variables $\X_i$ are now indicator random variables for the event that $|\pi(i+1)-\pi(i)|\leq d$. We introduce new techniques, introducing various types of `regular' subsets $I$ in our estimates, in order to provide better error terms than the straightforward analogue of Theorem~\ref{thm:direct} would provide. These better error terms allow us to prove Theorems~\ref{thm:expectedbreadth} and~\ref{thm:expectedjump} in the final section of the paper.

\section{Proof of Theorem~\ref{thm:direct}}
\label{sec:direct}

We use the notation from the previous section, so we choose permutations $\pi\in\Sn$ uniformly and, for each $i\in [n-1]$, we define $\X_i$ to be the indicator variable for the event that the index $i$ is the first element of a close pair for $\pi$. For $I\subseteq [n-1]$, define $\X_I=\prod_{i\in I} \X_i$. The Principle of Inclusion--Exclusion implies that
\begin{equation}
\label{eqn:PIE}
\Pr{\pi\in\Sn\text{ is $d$-prolific}}=\sum_{m=0}^{n-1} (-1)^m S_m,
\end{equation}
where
\begin{equation}
\label{eqn:bin_moment_def}
S_m=\sum_{\substack{I\subseteq [n-1]\\ |I|=m}}\Ex{\X_I}.
\end{equation}
Indeed, the Bonferroni inequalities~\cite[Chapter~1]{galambos} (a generalisation of inclusion-exclusion) imply that partial sums of the right hand side of~\eqref{eqn:PIE} are successively upper and lower bounds for the left hand side. More precisely, for any $r$ such that $0\leq r<n-1$, we have that
\[
\Pr{\pi\in\Sn\text{ is $d$-prolific}}=\left(\sum_{m=0}^{r} (-1)^m S_m\right)+\epsilon
\]
where $|\epsilon|\leq S_{r+1}$. 

We claim that
\begin{equation}
\label{eqn:bin_moment_bound}
S_m=\big(1+O(d^2m^2/n)\big)\frac{\lambda^m}{m!},
\end{equation}
where the implied constant is absolute. Once we have established this claim, we may prove the theorem as follows. Let $r=\max\{12d^2,\lceil (\log_2 n)^2\rceil\}$. We note that $r=O((\log n)^2)$, and
\[
\frac{\lambda^{r+1}}{(r+1)!}\leq \left(\frac{e\cdot 2d^2}{r+1}\right)^{r+1}\leq 2^{-r}=o(1/n).
\]
Hence $S_{r+1}=O((\log n)^6e^{\lambda}/n)$, by~\eqref{eqn:bin_moment_bound}. Moreover, again using~\eqref{eqn:bin_moment_bound},
\begin{align*}
\sum_{m=0}^{r} (-1)^m S_m &= \sum_{m=0}^r (-1)^m \frac{\lambda^m}{m!}+O\left(\sum_{m=0}^r d^2m^2/n \frac{\lambda^m}{m!}\right)\\
&=\sum_{m=0}^r (-1)^m \frac{\lambda^m}{m!}+O\left(d^2r^2e^\lambda/n \right).
\end{align*}
Taylor's theorem, applied to the Maclaurin series for $e^{-x}$, shows that
\[
\sum_{m=0}^r (-1)^m \frac{\lambda^m}{m!}=e^{-\lambda}+O\left((\log n)^6e^\lambda/n \right),
\]
and so the theorem follows.

It remains to prove our claim~\eqref{eqn:bin_moment_bound}. 

To prove the upper bound in~\eqref{eqn:bin_moment_bound}, we first note that there are $\binom{n-1}{m}$ choices for the subset $I$ in~\eqref{eqn:bin_moment_def}. Once $I=\{i_1,i_2,\ldots,i_m\}$ is fixed, we observe that the permutations $\pi$ that contribute to the event that $\X_I=1$ may all be constructed in the following manner.

First choose $m$ integer vectors $(\gamma_j,\delta_j)$ (for $j\in [m]$) of (Manhattan) length at most $d$ and with $\gamma_j$ positive.  There are $\lambda^m$ choices for such vectors. We will restrict our attention to those permutations $\pi$ such that $\pi(i_j+\gamma_j)=\pi(i_j)+\delta_j$ for all $j\in[m]$, so our vectors are the relative positions of close pairs.

Define a graph $\Gamma$ with vertex set $[n]$ and $m$ edges $\{i_j,i_j+\gamma_j\}$. If $\Gamma$ is not well defined (because $i_j+\gamma_j>n$ for some $j$) there are no permutations of the form we are counting. Otherwise, we proceed as follows. Since the elements $i_j$ are distinct, the graph $\Gamma$ is a forest. So, since $\Gamma$ has $m$ edges and $n$ vertices, it has exactly $n-m$ components. Order the components $C_1,C_2,\ldots,C_{n-m}$ of $\Gamma$ from largest to smallest. Let $c_1,c_2,\ldots,c_{m-n}$ be the number of vertices in $C_1,C_2,\ldots,C_{n-m}$ respectively. Choose a vertex $s_\ell$ in each component in some way (by, for example, picking the smallest). We now choose $\pi(s_1),\pi(s_2),\ldots,\pi(s_{n-m})$ in turn; the permutation $\pi$ is then completely determined by these choices, by the definition of $\Gamma$ and by the fact that the vectors $(\gamma_i,\delta_i)$ are fixed. There are at most $n$ choices for $\pi(s_1)$, and once we have made this choice the values of $\pi(v)$ with $v\in C_1$ are determined. More generally, there are at most $n-\sum_{\ell=1}^{j-1} c_i$ choices for $\pi(s_j)$, and so the number of choices is
\[
\prod_{j=1}^{n-m}\left(n-\sum_{\ell=1}^{j-1}c_i\right)\leq \left(\prod_{j=0}^{m-1}(n-2j)\right)(n-2m)!.
\]
Thus
\[
S_m\leq \binom{n-1}{m}\frac{1}{n!}\lambda^m\left(\prod_{j=0}^{m-1}(n-2j)\right)(n-2m)!=\big(1+O(d^2m^2/n)\big)\frac{\lambda^m}{m!}.
\]

We now prove the lower bound in~\eqref{eqn:bin_moment_bound}. The summands in~\eqref{eqn:bin_moment_def} are all non-negative, so we may restrict our attention to those subsets $I=\{i_1,i_2,\ldots,i_m\}$ such that $1\leq i_1<i_2\cdots<i_m\leq n-d$ and $i_{j+1}-i_j>d$ for $j\in[m-1]$. The number of subsets $I$ of this form is at least $(n-(2d+1)(m+1))^m/m!$. We provide a lower bound on $\Ex{\X_I}$ by providing a lower bound on the number of permutations $\pi$ constributing to the event that $\X_I=1$.

We first choose the $m$ values $\pi(i_j)$ with $j\in [m]$ so that these values differ by at least $2d+1$ and such that $d\leq \pi(i_j)\leq n-d$. The number of choices for these values is at least $(n-(2d+1)(m+1))^m$. We choose $m$ integer vectors $(\gamma_j,\delta_j)$ of (Manhattan) length at most $d$ and with $\gamma_j$ positive. There are $\lambda^m$ choices for such vectors. We define $\pi(i_j+\gamma_j)=\pi(i_j)+\delta_j$. The restrictions we have placed on the set $I$ and the values $\pi(i_j)$ ensure that we have defined $2m$ distinct values in the image of $\pi$, and so we have specified a partial permutation. 

For $j\in\{1,2,\ldots ,m\}$ and $\gamma\in\{1,2,\ldots,d\}\setminus \{\gamma_j\}$, choose the value $\pi(i_j+\gamma)$ so that it lies at distance at least $d+2$ from the $\pi(i_j)$ and is distinct from all previous choices. There are at least
\[
(n-(2d+1)-2m-(d-1)m)^{(d-1)m}\geq (n-(2d+1)(m+1))^{(d-1)m}
\]
choices for these values. The remaining $n-2m-(d-1)m$ values $\pi(i)$ are chosen in an arbitrary fashion to complete the permutation: there are $(n-2m-(d-1)m)!$ choices for these remaining values.

All permutations we have constructed have the property that for all $i\in I$ there is a unique dot to the right of the $i$th dot at distance at most $d+1$, so all permutations we have constructed are distinct. Since each permutation occurs with probability $1/n!$, we find that
\[
S_m\geq \frac{\lambda^m}{m!}\frac{\left(n-(2d+1)(m+1)\right)^{(d+1)m}}{n(n-1)\cdots (n-(d+1)m+1)}\geq \big(1+O(d^2m^2/n)\big)\frac{\lambda^m}{m!}.
\]
So our claim~\eqref{eqn:bin_moment_bound} follows, and the theorem is proved.\hfill\qed

Before continuing, we note that this proof can be adapted to find the asymptotic distribution for the number of close pairs in a random permutation when $d$ is fixed:

\begin{theorem}
Let $d \geq 1$ be fixed. As $n\rightarrow \infty$, for a uniformly chosen permutation $\pi$, the number of pairs $i,j$ with $\delta_\pi(i,j) \leq d+1$ approaches the Poisson distribution with mean $\lambda = d^2 + d$.
\end{theorem}
\begin{proof}
Let $Y = \sum_{i=1}^{n-1} X_i$ be the number of entries which start a close pair. Note that, for $m \geq 0$, we have
\[ \Ex{\binom{Y}{m}} = S_m. \]
From this, it follows that $m! S_m$ is equal to the $m$th factorial moment $\Ex{(Y)_m}$.

From \eqref{eqn:bin_moment_bound} above, we have that $m!S_m$ converges to $\lambda^m$ as $n \rightarrow \infty$, which is precisely the $m$th factorial moment of the Poisson distribution with mean $\lambda$.  Since the Poisson distribution is determined by its moments~\cite{von_mises} (and hence by its factorial moments), it follows that $Y$ is asymptotically Poisson. But it is not hard to see that the number of indices of a uniformly chosen permutation that are at the start of more than one close pair is $O(1/n^2)$, and so  the number of close pairs is well approximated by $Y$.
\end{proof}

\section{Minimum jumps}
\label{sec:jumps}

The aim in this section and the next is to prove Theorem~\ref{thm:mj_prob}. In this section, we prove the theorem modulo a key lemma (Lemma~\ref{lem:final_counting}, stated in the proof below) which counts the number of sequences of distinct integers in $[n]$ where some pairwise differences are constrained to be small. In Section~\ref{sec:sets}, we prove Lemma~\ref{lem:final_counting}, thus establishing Theorem~\ref{thm:mj_prob}.

\begin{proof}[Proof of Theorem~\ref{thm:mj_prob}]
Let $K$ be the set of non-zero integers $\delta$ with $|\delta|\leq d$. So $\mj(\pi)\geq d+1$ if and only if $\pi(i+1)-\pi(i)\not\in K$ for all $i\in [n-1]$. For $i\in [n-1]$, we define $\X_i$ to be the indicator variable for the event that $\pi(i+1)-\pi(i)\in K$, when $\pi$ is chosen uniformly from $\Sn$. For $I\subseteq [n-1]$, define $\X_I=\prod_{i\in I} \X_i$ and define
\[
S_m=\sum_{\substack{I\subseteq [n-1]\\ |I|=m}}\Ex{\X_I}.
\]

Let $r=\max\{12d,\lceil (t+1)\log_2 n\rceil \}$.
As in Section~\ref{sec:direct}, the Bonferroni inequalities imply that (when $n$ is sufficiently large, so $r<n-1$)
\begin{equation}
\label{eqn:mj_bonferroni}
\Pr{\mj(\pi)\geq d+1}=\left(\sum_{m=0}^{r} (-1)^m S_m\right)+\epsilon
\end{equation}
where $|\epsilon|\leq S_{r+1}$.

Our theorem will follow once we have established good estimates on $S_m$. More precisely, our estimates can be written as follows. We say that a function $p$ of $d$ and $m$ is \emph{$d$-small polynomial in $m$}, or $d$-SP($m$), if $p$ can be written as a polynomial in $m$ with coefficients (which are functions of $d$ only) bounded in absolute value by a polynomial in $d$. We will show that there exist $d$-SP($m$) functions $\overline{p}_0,\overline{p}_1,\ldots, \overline{p}_{t-1},\overline{q}$ such that for all $m\leq r+1$
\begin{equation}
\label{eqn:core_estimate}
S_m=\frac{(2d)^m}{m!}\left(\sum_{i=0}^{t-1}\overline{p}_i/n^i+O(\overline{q}/n^t)\right).
\end{equation}
Moreover, we find that $\overline{p}_0=1$.

We remark that the fact that the functions $\overline{p}_i$ and $\overline{q}$ are polynomials in $m$ (rather than just being bounded by polynomials) is important, because this enables us to use results in the theory of exponential polynomials~\cite{boyadzhiev} to prove estimates~\eqref{eqn:error_terms} and~\eqref{eqn:main_terms} below. 

Suppose we have shown~\eqref{eqn:core_estimate} holds. Our choice of $r$, together with the fact that $(r+1)!\geq ((r+1)/e)^{r+1}$, shows that $(2d)^{r+1}/(r+1)!\leq n^{-(t+1)}$. Moreover, $\overline{p_i}=o(n)$ and $\overline{q}=o(n)$ since $d=O(\log n)$ and $r=O(\log n)$. So~\eqref{eqn:core_estimate} shows that $|\epsilon|\leq S_{r+1}=o(n^{-t})$. Hence $\Pr{\mj(\pi)\geq d+1}$ is well approximated by $\sum_{m=0}^{r} (-1)^m S_m$. So the theorem will now follow from equations~\eqref{eqn:mj_bonferroni} and~\eqref{eqn:core_estimate}, once we have established firstly that
\begin{equation}
\label{eqn:error_terms}
\sum_{m=0}^{r}\frac{(2d)^m}{m!}\left|\overline{q}\right|=O(q_0e^{2d})
\end{equation}
for some polynomially bounded function $q_0$ of $d$, and secondly that, for any fixed $i$ with $0\leq i\leq t-1$,
\begin{equation}
\label{eqn:main_terms}
\sum_{m=0}^{r}(-1)^m\frac{(2d)^m}{m!}\overline{p}_i=p_i e^{-2d}+o(n^{-t}e^{2d}),
\end{equation}
for some polynomially bounded function $p_i$ of $d$.

To prove~\eqref{eqn:error_terms}, let $a_1$ and $a_2$ be positive integers such that $\left|\overline{q}\right|\leq (md)^{a_1}+a_2$. Let $b(x)$ be the real polynomial such that for each $m\geq 0$ the $m$th derivative of $b(x)e^x$ is equal to $m^{a_1}$ when evaluated at zero. (The polynomial $b(x)$ is known as the $a_1$th \emph{exponential polynomial}; its coefficients are given by Stirling numbers of the second kind. See the survey by Boyadzhiev~\cite{boyadzhiev}, especially equations~(2.9) and~(3.4). Boyadzhiev attributes these results to Grunert~\cite{grunert} in 1843.) Using the Maclaurin series of $b(x)e^x$, we find that
\[
\sum_{m=0}^{r}\frac{(2d)^m}{m!}\left|\overline{q}\right|\leq \sum_{m=0}^{\infty}\frac{(2d)^m\left(d^{a_1}m^{a_1}+a_2\right)}{m!}=(d^{a_1}b(2d)+a_2)e^{2d}.
\]
So~\eqref{eqn:error_terms} holds with $q_0=d^{a_1}b(2d)+a_2$.

To prove~\eqref{eqn:main_terms}, let $b_i(x)$ be the polynomial such that for each $m\geq 0$ the $m$th derivative of $b_i(x)e^x$ is equal to $\overline{p}_i$ when evaluated at zero. (This polynomial may be constructed by taking an appropriate combination of the exponential polynomials mentioned above.) Note that the coefficients of $b_i(x)$ are functions of $d$. Indeed, $b_i(x)$ is a $d$-SP($x$) function, since $\overline{p}_i$ is a $d$-SP($m$) function. Taylor's inequality now implies that
\[
\sum_{m=0}^{r}(-1)^m\frac{(2d)^m}{m!}\overline{p}_i=b_i(-2d)e^{-2d}+O\left( \frac{M (-2d)^{r+1}}{(r+1)!} \right),
\]
where $M$ is the maximum value of the $r+1$-st derivative of $b_i(x)e^x$ in the real interval $[-2d,2d]$. The $r+1$-st derivitive is of the form $f(x)e^x$ for some polynomial $f(x)$. The degrees of $f(x)$ and $b_i(x)$ are equal (equal to $k$, say), and the coefficient of $x^{k-j}$ in $f(x)$ is a sum of the coefficients of $b_i(x)$ multiplied by polynomials in $r$ of degree at most $j$.  So $M=o(n e^{2d})$, since $b_i(x)$ is a $d$-SP($x$) function, and since $r$ and $d$ are logarithmic in $n$. Moreover $|(-2d)^{r+1}/(r+1)!|=(2d)^{r+1}/(r+1)!=O(n^{-(t+1)})$, and so~\eqref{eqn:main_terms} holds with $p_i=b_i(-2d)$. Finally, note that $b_0(x)=1$ and so $p_0=1$, as claimed.

To prove the theorem it remains to establish the bounds~\eqref{eqn:core_estimate} on $S_m$.

Recall that $S_m$ is a sum of terms $\Ex{\X_I}$, indexed by subsets $I\subseteq [n-1]$ of size $m$. In our proof of the analogous theorem for the minimum Manhattan distance, our bounds on $S_m$ used the notion of regular subsets $I$, those subsets whose elements are widely spaced in the interval. Since few sets are irregular, and since $\Ex{\X_I}$ is never too large, it was only the regular sets that contributed significantly to the sum; and it was possible to estimate $\Ex{\X_I}$ precisely for regular sets. We take a similar approach here. If we were only interested in estimates for the leading term of $S_m$ we could define a regular set to be a set $I$ with no pair of adjacent elements, and then bound $S_m$ in the same way as before. But we are interested in lower order terms of $S_m$, and so this approach is not quite sufficient. Roughly speaking, we proceed as follows. We associate an integer $z$ with every subset $I$, corresponding to the number of adjacent elements in $I$, and say that $I$ is \emph{$z$-irregular}. The $0$-irregular sets correspond to the regular sets mentioned above, and we will show that the leading $t$ terms of $S_m$ are determined by the $z$-irregular sets with $z\leq t-1$. In fact, the detailed asymptotic behaviour of $I$ is determined by more than just $z$, so we further partition $z$-irregular sets into types, indexed by the integer partitions $\lambda$ of $z$, and provide bounds on the $z$-regular sets of type $\lambda$ separately.

More precisely, we define the irregularity and type of a subset $I\subseteq [n-1]$ as follows. A subset $I$ with $|I|=m$ is made up of $c$ runs of consecutive elements, where $1\leq c\leq m$. The lengths $\ell_1,\ell_2,\ldots,\ell_c$ of these runs sum to $m$.
We say that the \emph{type} of a set $I$ is the integer partition $\lambda$ whose parts are precisely those integers $\ell_i-1$ that are positive. We say that $I$ is \emph{$z$-irregular} when $\lambda$ is a partition of $z$. We note that $z=m-c$. For example, the subset $I=\{3,4,6,7,8,10\}$ has $c=3$ runs of lengths $2$, $3$ and $1$ respectively, so is $3$-regular of type $\lambda=[2,1]$.

Now
\begin{equation}
\label{eqn:split_sum}
S_m
=\sum_{z=0}^{t-1}\sum_{\substack{|I|=m\\
\text{$z$-irregular}}}\Ex{\X_I}+\sum_{z\geq t}\sum_{\substack{|I|=m\\
\text{$z$-irregular}}}\Ex{\X_I}.
\end{equation}
We will see below that the first sum on the right hand side of~\eqref{eqn:split_sum} contributes the most significant terms, and the second sum is relatively small. So we will count the first sum carefully, and then provide a crude upper bound for the second sum.

Let $\lambda$ be a partition of the integer $z$. We may compute the number of subsets $I\subseteq [n-1]$ of type $\lambda$ with $|I|=m$ as follows. Let $u_i$ be the number of parts of $\lambda$ equal to $i$, so $\lambda=[1^{u_1}2^{u_2}\cdots z^{u_z}]$. An $m$-subset $I$ of type $\lambda$ is made up of $c=m-z$ runs of lengths $\ell_1,\ell_2,\ldots,\ell_c$; the number of choices for these lengths is the multinomial coefficient $\binom{m-z}{u_1,u_2,\ldots,u_z}$. Once these lengths are fixed, $I$ is determined by the lengths of the gaps between runs. The gap lengths are a sequence of $c+1$ non-negative integers that sum to $n-1-m$, with all integers strictly positive apart from possibly the first or last. So the number of choices for these gap lengths is $\binom{n-m}{m-z}$. Hence, writing $\rho:=\sum_{i=0}^zu_i$ for the number of parts of $\lambda$, the number of subsets of $I\subseteq [n-1]$ of type $\lambda$ with $|I|=m$ is 
\[
\binom{m-z}{u_1,u_2,\ldots,u_z} \binom{n-m}{m-z}=\frac{1}{m!}\frac{\prod_{j=0}^{z+\rho-1}(m-j)\prod_{j=0}^{m-z-1}(n-m-j)}{\prod_{j=1}^z u_j!}.
\]

We fix a $z$-irregular set $I=\{i_1,i_2,\ldots,i_m\}$ of type $\lambda$, and provide estimates for $\Ex{\X_I}$. Since $I$ is $z$-irregular, $|I\cup(1+I)|=2m-z$. Let $\overline{I}\subseteq\{1,2,\ldots,n\}$ be a set of size $t+2m-z$ containing $I\cup(1+I)$. Define $\nu_I$ to be the number of choices of distinct integers $h_{i'}\in\{1,2,\ldots,n\}$ for $i'\in\overline{I}$ such that $h_{i_j+1}-h_{i_j}\in K$ for $1\leq j\leq m$. Then the number of permutations $\pi$ with $\pi(i_j+1)-\pi(i_j)\in K$ for $1\leq j\leq m$ is $ (n-t-2m+z)! \nu_I$, and so
\[
\Ex{\X_I}=\frac{ (n-t-2m+z)! \nu_I}{n!}=\frac{\nu_I}{\prod_{j=0}^{t+2m-z-1}(n-j)}.
\]

We will use the following lemma, proved in the next section, to provide good estimates for~$\nu_I$:

\begin{lemma}
\label{lem:final_counting}
The integer $\nu_I$ only depends on $d$, $m$ and the type $\lambda$ of $I$. For a partition $\lambda$, we may therefore define $\mu(\lambda)$ (a function of $d$ and $m$) by $\mu(\lambda):=\nu_I$ where $I$ is any set of type $\lambda$.

Let $\lambda$ be a fixed partition of an integer $z$ with $\rho$ parts. Then for all sufficiently large~$n$, and all $m$ such that $m\geq z+\rho$,
\[
\mu(\lambda)=(2d)^m\left(n^{t+m}\left(\sum_{i=0}^{t-1}p_i^\lambda/n^i+O(q^\lambda/n^t)\right)\right)
\]
where $p_0^\lambda,p_1^\lambda,\ldots,p_{t-1}^\lambda,q^\lambda$ are $d$-SP($m$) functions such that
$p_i^\lambda=0$ when $i<z$. Moreover, when $\lambda$ is the empty partition (so $z=0$), then $p_0^{\lambda}=1$.
\end{lemma}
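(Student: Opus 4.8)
\textbf{Proof proposal for Lemma~\ref{lem:final_counting}.}
The plan is to peel off the $t$ unconstrained coordinates of $\overline I$, recognise the rest as a problem of labelling a disjoint union of paths, and then evaluate that count as an explicit polynomial in $n$ by a ``translation plus inclusion--exclusion'' argument. First I would note that the $t$ coordinates in $\overline I\setminus(I\cup(1+I))$ carry no constraint except distinctness, so if $\nu_I^{0}$ denotes the number of choices of distinct $h_{i'}\in[n]$ ($i'\in I\cup(1+I)$) with $h_{i_j+1}-h_{i_j}\in K$ for all $j$, then
\[
\nu_I=\nu_I^{0}\prod_{k=0}^{t-1}\bigl(n-(2m-z)-k\bigr).
\]
Next, regard $I\cup(1+I)$ as the vertex set of the graph $\Gamma_I$ with one edge $\{i,i+1\}$ for each $i\in I$. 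If $I$ has runs of lengths $L_1,\dots,L_c$ (so $c=m-z$, $\sum_jL_j=m$, and $\{L_1,\dots,L_c\}$ consists of the parts of $\lambda$ each increased by one together with $m-z-\rho$ copies of $1$), then $\Gamma_I$ is the disjoint union of paths on $L_1+1,\dots,L_c+1$ vertices, and since $K$ is symmetric the orientations of the edges are irrelevant. Hence $\nu_I^{0}$, and so $\nu_I$, depends only on $d$, $m$ and $\lambda$; this justifies the definition of $\mu(\lambda)$ and proves the first assertion of the lemma.

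I would then evaluate $\nu_I^{0}$, the number of labellings of $P_{L_1+1}\sqcup\cdots\sqcup P_{L_c+1}$ by distinct elements of $[n]$ with adjacent labels differing by an element of $K$. Call a \emph{shape} of $P_{L+1}$ a tuple $(\delta_1,\dots,\delta_L)\in K^{L}$ whose partial sums are pairwise distinct; let $N_L$ be the number of shapes, so that $N_1=2d$ and $0\le N_L\le(2d)^{L}$, and note that a shape has footprint of diameter at most $Ld$, which translates into $[n]$ in exactly $n-(\text{diameter})$ ways once $n$ exceeds that diameter. So the single--path count is $N_Ln-W_L$ with $0\le W_L\le(2d)^{L}Ld$, a polynomial of degree $1$ in $n$. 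For the disjoint union one must also forbid labels of distinct paths from coinciding, which I would handle by inclusion--exclusion over the ways to identify vertices lying in different paths: a pattern using $s$ such identifications contributes a polynomial in $n$ of degree at most $c-s$. The key point is that the coefficient of $n^{j}$ in $\nu_I^{0}$ with $j\ge c-t+1$ is affected only by patterns with $s\le t-1$, of which there are only polynomially many in $m$ ($t$ being fixed), each contributing coefficients bounded by $(2d)^{m}$ times a polynomial in $m$ and $d$ -- here one uses $N_L\le(2d)^{L}$, $W_L\le(2d)^LLd$, and the fact that the number of identifications meeting the $m-z-\rho$ interchangeable length--one runs enters only through binomial coefficients in $m-z-\rho$. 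Consequently, provided $n$ is sufficiently large (it suffices that $n$ exceed the largest footprint diameter, which is $O(md)$), $\nu_I^{0}$ is a polynomial in $n$ of degree $c=m-z$ with leading coefficient $\prod_jN_{L_j}=(2d)^{m-z-\rho}\prod_{\text{parts }\mu_i\text{ of }\lambda}N_{\mu_i+1}$, whose top $t$ coefficients equal $(2d)^{m}$ times $d$-SP($m$) functions.

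It remains to assemble these pieces. By the displayed identity, $\mu(\lambda)=\nu_I=\nu_I^{0}\prod_{k=0}^{t-1}\bigl(n-(2m-z)-k\bigr)$ is a polynomial in $n$ of degree $t+m-z$ with leading coefficient $\prod_jN_{L_j}$, so writing it as $(2d)^{m}n^{t+m}\sum_{i\ge0}p_i^{\lambda}/n^{i}$ forces $p_i^{\lambda}=0$ for $i<z$, while
\[
p_z^{\lambda}=\frac{\prod_jN_{L_j}}{(2d)^{m}}=\frac{\prod_{i}N_{\mu_i+1}}{(2d)^{z+\rho}}\in[0,1],
\]
which is $1$ when $\lambda$ is empty. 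Multiplying the top $t$ coefficients of $\nu_I^{0}$ (explicit, and $(2d)^{m}$ times $d$-SP($m$)) by the degree--$t$ falling factorial $\prod_k(n-(2m-z)-k)$, whose coefficients are polynomials in $m$, produces $p_1^{\lambda},\dots,p_{t-1}^{\lambda}$ as $d$-SP($m$) functions. Finally the leftover terms have $n$-degree at most $m$; to collect them into the error term one needs a bound of the shape $\bigl|\,[\text{coeff of }n^{m-k}\text{ in }\mu(\lambda)]\,\bigr|\le(2d)^{m}(Cmd)^{k}\,\mathrm{poly}(m,d)$ for an absolute constant $C$, after which the geometric tail $\sum_{k\ge0}(Cmd/n)^{k}$ converges for $n>Cmd$ and bounds the leftover by $(2d)^{m}\,\mathrm{poly}(m,d)\,n^{m}=(2d)^{m}n^{t+m}\cdot O(q^{\lambda}/n^{t})$ with $q^{\lambda}$ a $d$-SP($m$) function, as required.

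The step I expect to be the real obstacle is the inclusion--exclusion analysis of $\nu_I^{0}$: verifying that it is genuinely a polynomial in $n$ with leading coefficient $\prod_jN_{L_j}$, and -- more delicately -- that its top $t$ coefficients, as well as the tail bound on the coefficient of $n^{m-k}$, come out with \emph{polynomial-in-$m$} (not merely bounded) dependence, as the $d$-SP($m$) property demands. The substance of this is that identifications among the many interchangeable length--one runs contribute only binomial coefficients in $m$, and that an identification always strictly lowers the power of $n$, so the potentially large coefficients live in negligibly small powers of $n$. Everything else -- the reduction to paths, the single--path count, and the final Taylor/collecting step -- is routine bookkeeping.
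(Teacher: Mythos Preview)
Your proposal follows essentially the same strategy as the paper: encode the constraints as a graph that is a disjoint union of red paths (plus isolated points), use inclusion--exclusion over forced equalities (the paper's ``blue edges'') to pass from distinct to unrestricted integers, and exploit the interchangeability of the many length-$1$ runs to show that the top $t$ coefficients are polynomials in $m$ with $d$-bounded coefficients. Your preliminary step of splitting off the $t$ unconstrained coordinates as the falling factorial $\prod_{k=0}^{t-1}(n-(2m-z)-k)$ is correct and a mild simplification the paper does not make (it keeps the $t$ isolated vertices inside the bounded-size graph $\Gamma_1$ and lets the general Lemma~\ref{lem:graph_count} handle them). Your identification of the leading coefficient $\prod_j N_{L_j}$ and of $p_z^\lambda=\prod_i N_{\mu_i+1}/(2d)^{z+\rho}$ is also correct.

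The one place where your route diverges substantively is the error term. You propose to bound \emph{every} coefficient of $\mu(\lambda)$ by $(2d)^m(Cmd)^k\,\mathrm{poly}(m,d)$ and sum a geometric tail, and you rightly flag this as the obstacle: the full inclusion--exclusion has $2^{\Theta(m^2)}$ signed terms, and controlling individual coefficients of the resulting polynomial uniformly in $k$ is awkward to justify directly. The paper sidesteps this entirely via Bonferroni truncation (this is the content of its Lemma~\ref{lem:graph_count}): rather than analysing the whole polynomial, one truncates the inclusion--exclusion after a \emph{bounded} number $a$ of blue edges---any $a$ large enough that $a{+}1$ blue edges necessarily drop the component count by at least $t$ suffices, e.g.\ $a\le\binom{n_0+2t}{2}$ where $n_0$ is the number of vertices in the bounded piece---and bounds the remainder by the single next term $\sum_{|F|=a+1}Z(\Gamma_F)\le\binom{n_0+2m}{a+1}(2d)^m n^{c-t}$, which is already of the required size. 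This delivers the $O(q^\lambda/n^t)$ error with $q^\lambda$ manifestly $d$-SP($m$), and bypasses the uniform coefficient bound you identify as the sticking point. If you replace your geometric-tail step by this truncation, your argument is complete and equivalent to the paper's.
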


Using this lemma, and summing over partitions $\lambda$ of $z$, we find that whenever $m\geq z+\rho$
\begin{align*}
\sum_{\substack{|I|=m\\
\text{$z$-irregular}}}\Ex{\X_I}&=\sum_\lambda \frac{1}{m!}\frac{\prod_{j=0}^{z+\rho-1}(m-j)\prod_{j=0}^{m-z-1}(n-m-j)}{\prod_{j=0}^{t+2m-z-1}(n-j)\prod_{j=1}^z u_j!} \mu(\lambda)\\
&=\sum_\lambda \frac{(2d)^m}{m!}\frac{n^{t+m}\prod_{j=0}^{z+\rho-1}(m-j)\left(\sum_{i=0}^{t-1}p_i^\lambda/n^i+O(q^\lambda/n^t)\right)}{\prod_{j=0}^{m-1}(n-j)\prod_{j=0}^{t-1}(n-2m+z-j)\prod_{j=1}^z u_j!},
\end{align*}
where $p_0^\lambda,p_1^\lambda,\ldots,p_{t-1}^\lambda,q^\lambda$ satisfy the conditions of Lemma~\ref{lem:final_counting}. Though we have imposed the restriction that $m\geq z+\rho$, the formula actually holds for all non-negative values of $m$, since for $0\leq m<z+\rho$ the left hand side is an empty sum (there are no $z$-regular subsets of cardinality $m$ with a type having $\rho$ parts), and the right hand side is zero due to the product $\prod_{j=0}^{z+\rho-1}(m-j)$ in the numerator. But the number of possible types $\lambda$ of a $z$-regular set for $z<t$ is finite, and the $t$ most significant coefficients of the product $\prod_{j=0}^{m-1}(n-j)\prod_{j=0}^{t-1}(n-2m+z-j)$ are polynomials in $m$. Hence there exist $d$-SP($m$) functions $\overline{p}_0,\overline{p}_1,\ldots,\overline{p}_{t-1}$ and $q_1$ such that
\begin{equation}
\label{eqn:first_sum}
\sum_{z=0}^{t-1}\sum_{\substack{|I|=m\\
\text{$z$-irregular}}}\Ex{\X_I} = \frac{(2d)^m}{m!}\left((\sum_{i=0}^{t-1}\overline{p}_i/n^i)+O(q_1/n^t)\right).
\end{equation}
Moreover, since $p_0^\lambda=0$ when $z>0$ and $p_0^\lambda=1$ otherwise, we see that $\overline{p}_0=1$.

The equation~\eqref{eqn:first_sum} provides a good estimate for the first sum on the right hand side of equation~\eqref{eqn:split_sum}.  We show that the second sum in~\eqref{eqn:split_sum} is small. Clearly $\nu_I\leq (2d)^m n^{t+m-z}$, as the integers $h_{i'}$ are determined by fixing one of the $(2d)^m$ choices for the differences $h_{i_j+1}-h_{i_j}\in K$ and then choosing the $t+m-z$ integers $h_{i'}\in [n]$ where $i'\in\overline{I}\setminus (1+I)$.
So $\Ex{\X_I}\leq (2d)^m n^{-m}(1+O(m^2/n))=(2d)^mO(n^{-m})$. If $I$ is
$z$-irregular, its run lengths $\ell_1,\ell_2,\ldots ,\ell_{m-z}$ are positive integers summing to $m$, and so the number of possibilities for these run lengths is
\[
\binom{m-1}{m-z-1}=\binom{m-1}{z}\leq m^z.
\]
Once the run lengths are determined, the set $I$ is determined by the lengths of the gaps between runs, and there are (see above) $\binom{n+1-m}{m-z}$ possibilities for these gaps. Since
\[
\binom{n+1-m}{m-z}\leq \frac{n^{m-z}}{(m-z)!}\leq \frac{m^zn^{m-z}}{m!},
\]
we find that
\begin{align*}
\sum_{z\geq t}\sum_{\substack{|I|=m\\
\text{$z$-irregular}}}\Ex{\X_I}&\leq\frac{(2d)^m}{m!}\sum_{z\geq t} m^{2z}n^{m-z}O(n^{-m})\\
&=\frac{(2d)^m}{m!}O(m^{2t}n^{-t}).
\end{align*}

Combining this bound with~\eqref{eqn:first_sum}, and setting $\overline{q}=q_1+m^{2t}$, the estimate~\eqref{eqn:core_estimate} follows, and so our theorem follows once Lemma~\ref{lem:final_counting} is proved.
\end{proof}

\section{Choosing sets of integers}
\label{sec:sets}

In this section, we complete the proof of Theorem~\ref{thm:mj_prob}
 by proving Lemma~\ref{lem:final_counting}. This lemma provides good bounds on the number of vectors of distinct integers from $\{1,2,\ldots,n\}$ subject to some constraints on their relative values. We begin this section by discussing an easier problem, where the integers are not required to be distinct. We phrase the constraints we are interested in as a finite labelled graph. We then consider a more general problem involving distinct integers, before proving the lemma.
 
 \subsection{Integers that are not necessarily distinct}
 
Recall from Section~\ref{sec:jumps} that we define $K$ to be the set of all non-zero integers $\delta$ with $|\delta|\leq d$. Let $\Gamma$ be a simple loopless graph on a vertex set $V$, whose edges are coloured red and blue. We say that $\omega:V\times V\rightarrow K\cup\{0\}$ is a \emph{height labelling} for $\Gamma$ if $\omega(u,v)=-\omega(v,u)$ for all $u,v\in V$ and if $\omega(u,v)\in K$ if and only if $uv$ is a red edge. When $\Gamma$ has $m$ red edges, there are exactly $(2d)^{m}$ height labellings of $\Gamma$, since $|K|=2d$.

Let $n$ be a sufficiently large integer (more precisely, we assume that $n>d|V|$). We write $z_\omega(\Gamma)$ for the number of sequences $(h_v:v\in V)$ of positive integers such that:
\begin{align*}
h_v&\in[n] \text{ for all }v\in V\\
h_u&=h_v+\omega(u,v)\text{ for all edges $uv$ in $\Gamma$ (whether red or blue).}
\end{align*}
For example, if $\Gamma$ is a pair of vertices $u$ and $v$ joined by an edge (either red or blue) then $z_\omega(\Gamma)=n-|\omega(u,v)|$. Another example: Let $\Gamma$ be the triangle on three vertices $u$, $v$ and~$x$, with $uv$ and $vx$ coloured red and $ux$ coloured blue. Then $z_\omega(\Gamma)=0$ when $\omega(u,v)+\omega(v,x)\not=0$, otherwise $z_\omega(\Gamma)=n-|\omega(u,v)|$.

For an $\ell$-vertex walk $p=u_1,u_2,\ldots,u_\ell$ in $\Gamma$, we define the \emph{incline} $\omega(p)$ by
\[
\omega(p)=\sum_{i=1}^{\ell-1} \omega(u_i,u_{i+1}).
\]
In particular, if $p$ is a trivial ($1$-vertex) walk, then $\ell=1$ and $\omega(p)=0$.

We say that a height labelling $\omega$ is \emph{inconsistent} if there is a cycle in $\Gamma$ of non-zero incline; otherwise we say that $\omega$ is \emph{consistent}. Clearly $z_\omega(\Gamma)=0$ if $\omega$ is inconsistent. When $\omega$ is consistent, we may determine $z_\omega(\Gamma)$ as follows. Partition the vertices of $\Gamma$ into connected components $C_1,C_2,\ldots,C_c$. For $i\in\{1,2,\ldots,c\}$ let $\Gamma_i$ be the edge-coloured graph induced on $C_i$, and let $\omega_i$ be the restriction of $\omega$ to the vertices in $C_i$. We see that
\[
z_\omega(\Gamma)=\prod_{i=1}^c z_{\omega_i}(\Gamma_i).
\]
Now suppose that $\Gamma$ is connected. Fix a vertex $u\in \Gamma$. Let
\[
b_1=|\min\{\omega(p)\}|=-\min\{\omega(p)\},
\]
where the minimum is taken over all walks $p$ starting from $u$. Similarly, let
\[
b_2=|\max\{\omega(p)\}|=\max\{\omega(p)\}.
\]
Since $\omega$ is consistent, we may in fact restrict our attention to paths rather than walks and so $b_1+b_2\leq d|V|$. We note that (since $\Gamma$ is connected) the integers $h_v$ counted by $z_\omega(\Gamma)$ are determined by $h_u$. Moreover, the conditions that $h_v\in[n]$ are equivalent to the condition that $1+b_1\leq h_u\leq n-b_2$. Thus, since $n$ is sufficiently large, $z_\omega(\Gamma)=n-b_1-b_2$ in this case. 

One consequence of the analysis above is that when $\Gamma$ has $c$ components and $\omega$ is consistent we may write
\[
z_\omega(\Gamma)=n^c+\sum_{x=1}^c (-1)^xy_x n^{c-x}
\]
where the coefficients $y_x$ are integers that are bounded in absolute value by $c^x(d|V|)^x$. Indeed, an argument using inclusion-exclusion shows that $n^c+\sum_{x=1}^{t-1} (-1)^xy_x n^{c-x}$ is a lower bound for $z_\omega(\Gamma)$ when $t-1$ is odd, and an upper bound when $t-1$ is even.

We write $Z(\Gamma)=\sum_\omega z_\omega(\Gamma)$, where the sum is over all height labellings of~$\Gamma$. We note the following:
\begin{itemize}
\item When $\Gamma$ isolated point, $Z(\Gamma)=n$.
\item When $\Gamma$ is a pair of vertices joined by a red edge, then
\[
Z(\Gamma)=\sum_{\delta\in K} (n-|\delta|)=2dn-d(d+1)=2d(n-\tfrac{1}{2}(d+1)).
\]
\item Suppose that $V$ can be written as the disjoint union $V=V_1\cup V_2$, where there are no edges between $V_1$ and $V_2$. Let $\Gamma_1$ and $\Gamma_2$ be the induced edge-coloured subgraphs on $V_1$ and $V_2$ respectively. Then
\[
Z(\Gamma)=Z(\Gamma_1) Z(\Gamma_2).
\]
\end{itemize}

\subsection{Distinct integers}

Let $\Gamma$ be a graph with all edges coloured red, and let $\omega$ be a height labelling of $\Gamma$. We write $z^*_\omega(\Gamma)$ for the number of sequences $(h_v:v\in V)$ of \emph{distinct} positive integers counted by $z_\omega(\Gamma)$, in other words sequences such that:
\begin{align*}
h_v&\in\{1,2,\ldots,n\} \text{ for all }v\in V\\
h_u&=h_v+\omega(u,v)\text{ for all (red) edges $uv$ in $\Gamma$}\\
h_u&\not=h_v\text{ for all $u,v\in V$ with $u\not=v$}.
\end{align*}
Define
\[
Z^*(\Gamma)=\sum_\omega z^*_\omega(\Gamma),
\]
where the sum is over all height labellings $\omega$ of $\Gamma$.
 
Let $E$ be the set of edges in the complement of $\Gamma$. For $F\subseteq E$, let $\Gamma_F$ be the graph obtained by adding the edges in $F$ to $\Gamma$, and colouring these new edges blue. The Principle of Inclusion-Exclusion states that
\[
Z^*(\Gamma)=\sum_{f=0}^{|E|}(-1)^f \sum_{\substack{F\subseteq E\\|F|=f}}Z(\Gamma_F).
\]
Moreover, the partial sum
\[c_0
\sum_{f=0}^a(-1)^f \sum_{\substack{F\subseteq E\\|F|=f}}Z(\Gamma_F)
\]
is an upper bound for $Z^*(\Gamma)$ when $a$ is even, and a lower bound when $a$ is odd.

For a function $f$ of $d$, $m$ and $n$, and a function $g$ of $n$, we write $f=\sO(g)$ to mean that $f=O(qg)$, where $q$ is a polynomial in $d$ and $m$. (Since $d$ and $m$ are logarithmic in $n$ in our applications, this agrees with the usual meaning of `soft Oh'.) 
\begin{lemma}
\label{lem:graph_count}
Let $n_0$ and $t$ be fixed constants. Let $\Gamma_1$ be a graph on $n_0$ vertices, with all edges coloured red, and with $c_0$ components. Let $\Gamma_2$ be a union of $m'$ disjoint red edges for some integer $m'$. Let $\Gamma$ be the disjoint union of $\Gamma_1$ and $\Gamma_2$. Suppose that $\Gamma$ has $m$ red edges. Let $c=c_0+m'$ be the number of components of $\Gamma$. Suppose that $c\geq t$. Then
\begin{equation}
\label{eqn:sp}
Z^*(\Gamma)/(2d)^{m}=\left(\sum_{i=0}^{t-1}x_i n^{c-i}\right)+\sO(n^{c-t}),
\end{equation}
where the coefficients $x_i$ are $d$-SP($m$). When $\Gamma_1$ contains no edges, $x_0=1$.
\end{lemma}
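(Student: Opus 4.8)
The plan is to work from the inclusion--exclusion expansion $Z^*(\Gamma)=\sum_{F\subseteq E}(-1)^{|F|}Z(\Gamma_F)$ together with three facts established above: $Z$ is multiplicative over connected components; $Z$ of a single red edge equals $2d\bigl(n-\tfrac12(d+1)\bigr)$; and for a fixed graph $H$ with $c'$ components and $e'$ red edges, $Z(H)$ is a polynomial in $n$ of degree $c'$ whose coefficient of $n^{c'-x}$ has absolute value at most $(2d)^{e'}(c')^{x}(d|V(H)|)^{x}$. Throughout, $n$ is taken large relative to $d$ and $|V(\Gamma)|=n_0+2m'$ (as in the analysis of $z_\omega$ above), and I write $a=\tfrac12(d+1)$ and $e_1=e(\Gamma_1)$, so that $m=e_1+m'$ and $c=c_0+m'$.

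\emph{The tail.} First I would isolate the subsets $F$ that matter. A complement edge lying inside a component of $\Gamma$ must have both endpoints in one component of $\Gamma_1$ (a component of $\Gamma$ coming from $\Gamma_2$ is a single red edge, with no complement edge inside it), so there are at most $\binom{n_0}{2}$ such edges; and if $\Gamma_F$ has at least $c-t+1$ components then $F$ merges at most $t-1$ components of $\Gamma$, so the crossing edges of $F$ touch at most $2(t-1)$ components of $\Gamma$ and hence number $O_t(1)$. Therefore there is a constant $B_0=B_0(n_0,t)$ with $|F|>B_0\Rightarrow c(\Gamma_F)\le c-t$. Using the Bonferroni bracketing established above, write $Z^*(\Gamma)=T+\epsilon$ with $T=\sum_{f=0}^{B_0}(-1)^f\sum_{|F|=f}Z(\Gamma_F)$, so that $|\epsilon|\le\sum_{|F|=B_0+1}Z(\Gamma_F)\le\binom{|E|}{B_0+1}(2d)^m n^{c-t}$, using $Z(\Gamma_F)\le(2d)^m n^{c(\Gamma_F)}$ and $c(\Gamma_F)\le c-t$ for those $F$. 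Since $|E|=O(m^2)$ this gives $\epsilon/(2d)^m=\sO(n^{c-t})$; note also that $\epsilon$ has degree at most $c-t$ as a polynomial in $n$.

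\emph{The leading coefficients.} It remains to analyse the finite sum $T$, where I would exploit the action of $S_{m'}$ permuting the $m'$ red edges of $\Gamma_2$: this acts by automorphisms of $\Gamma$ preserving $E$, and $Z(\Gamma_F)$ depends only on the orbit of $F$. Grouping the terms of $T$ by orbit, for $F$ with $|F|\le B_0$ at most $k=k(F)\le 2B_0$ of the red edges of $\Gamma_2$ are incident to $F$; writing $B_F$ for the bounded graph built from $\Gamma_1$, those $k$ red edges and $F$, one has $\Gamma_F=B_F\sqcup(m'-k)\,K_2$, so by multiplicativity and $e_1+m'=m$, $Z(\Gamma_F)=Z(B_F)\bigl(2d(n-a)\bigr)^{m'-k}=(2d)^m R_F(n)(n-a)^{m'-k}$, where $R_F:=Z(B_F)/(2d)^{e_1+k}$ is one of finitely many polynomials in $n$ with coefficients bounded polynomially in $d$ and not depending on $m$. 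The orbit of $F$ has size a polynomial in $m'$ of degree $k\le 2B_0$ with absolutely bounded coefficients, hence a $d$-SP($m$) function. Thus $T/(2d)^m$ is a finite sum $\sum_\tau(-1)^{|F_\tau|}o_\tau R_\tau(n)(n-a)^{m'-k_\tau}$ with $o_\tau$ a $d$-SP($m$) function, $R_\tau$ a fixed $n$-polynomial with $d$-bounded coefficients, and each summand of degree $c(B_\tau)+m'-k_\tau\le c$. Expanding $(n-a)^{m'-k_\tau}$ by the binomial theorem, for each fixed $i$ the coefficient $x_i$ of $n^{c-i}$ in $T/(2d)^m$ is a finite sum of products of $d$-SP($m$) functions of degree bounded in terms of $t$ and $n_0$, hence is itself $d$-SP($m$); and since $\epsilon$ has degree at most $c-t$, this $x_i$ is also the coefficient of $n^{c-i}$ in $Z^*(\Gamma)/(2d)^m$ for $i\le t-1$. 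Combined with the tail estimate this gives $Z^*(\Gamma)/(2d)^m=\sum_{i=0}^{t-1}x_i n^{c-i}+\sO(n^{c-t})$. Finally, when $\Gamma_1$ has no edges its components are isolated points, so no complement edge lies inside a component of $\Gamma$; hence any nonempty $F$ strictly lowers the component count, and only $F=\emptyset$ contributes to the coefficient of $n^c$, which from $Z(\Gamma)=n^{n_0}\bigl(2d(n-a)\bigr)^{m'}$ equals $(2d)^{m'}=(2d)^m$; thus $x_0=1$.

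The step I expect to be the main obstacle is this middle one: one must verify carefully that the orbit sizes are genuinely polynomial in $m'$ (not merely polynomially bounded) with coefficients bounded in $d$, that the factorisation $\Gamma_F=B_F\sqcup(m'-k)\,K_2$ and the degree bound $c(B_\tau)+m'-k_\tau\le c$ are exactly right, and --- the property that makes the whole bookkeeping go through --- that products and finite sums of $d$-SP($m$) functions \emph{of bounded degree} are again $d$-SP($m$). This closure is used repeatedly, and it holds precisely because $n_0$ and $t$, hence $B_0$ and all the $k_\tau$, are constants, which keeps every degree in $m$ that arises bounded. The tail estimate is by comparison routine, but it does rely on using the Bonferroni bound (error at most the next omitted term) rather than crudely bounding the whole remainder of the series.
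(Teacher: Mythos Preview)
Your proof is correct and follows essentially the same route as the paper's: truncate the inclusion--exclusion at a bounded depth using Bonferroni, then factor each surviving $\Gamma_F$ as a bounded ``core'' together with the $m'-k$ untouched red edges of $\Gamma_2$ (each contributing $2d(n-\tfrac12(d+1))$), so that the coefficient of each $n^{c-i}$ becomes a bounded combination of $d$-SP($m$) functions. The only cosmetic difference is that you organise the surviving terms by $S_{m'}$-orbit, whereas the paper groups them by the number $s$ of $\Gamma_2$-components touched and reduces to a fixed auxiliary graph $\Gamma'=\Gamma_1\sqcup s\,K_2$; your orbit-size formula $m'(m'-1)\cdots(m'-k+1)/g_\tau$ plays exactly the role of the paper's factor $\binom{m'}{s}$.
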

\begin{proof} Let $a$ be the smallest integer with the property that adding $a+1$ blue edges to $\Gamma$ always results in a graph with $c-t$ or fewer components. We note that $a$ is bounded, since $a\leq \binom{n_0+2t}{2}$. We have that
\begin{align*}
\left|Z^*(\Gamma)-\sum_{f=0}^a(-1)^f \sum_{\substack{F\subseteq E\\|F|=f}}Z(\Gamma_F)\right|&\leq \sum_{\substack{F\subseteq E\\|F|=a+1}}Z(\Gamma_F)\\
&\leq \sum_{\substack{F\subseteq E\\|F|=a+1}} (2d)^m n^{c-t}\\
&\leq \binom{n_0+2m}{a+1}(2d)^m n^{c-t}\\
&=(2d)^m\sO(n^{c-t}).
\end{align*}
For $0\leq s\leq 2a$, let ${\cal F}_s$ be the collection of subsets $F\subseteq E$ such that $|F|\leq a$ and such that exactly $s$ components of $\Gamma_2$ contain an extremity of some edge in $F$ (so are no longer components in $\Gamma_F$).  We see that
\begin{align}
\nonumber
\sum_{f=0}^a(-1)^f \sum_{\substack{F\subseteq E\\|F|=f}}Z(\Gamma_F)&=\sum_{s=0}^{2a}\sum_{F\in {\cal F}_s}(-1)^{|F|}Z(\Gamma_F)\\
\label{eqn:half_way}
&=\left(\sum_{s=0}^{t-1}\sum_{F\in {\cal F}_s}(-1)^{|F|}Z(\Gamma_F)\right)+(2d)^m\sO(n^{c-t}),
\end{align}
the last line following since for any $s\geq t$ we have $Z(\Gamma_F)\leq (2d)^m n^{c-t}$ for all $F\in{\cal F}_s$, and since $|{\cal F}_s|=\sO(1)$ (because $|{\cal F}_s|\leq \binom{|E|}{s}$ where $|E|=\binom{n_0+2m}{2}$). There are a bounded number of possibilities for $s$ in the sum~\eqref{eqn:half_way}, so the statement~\eqref{eqn:sp} will follow provided we can show that for any fixed value of $s$
\begin{equation}
\label{eqn:finish}
\sum_{F\in {\cal F}_s}(-1)^{|F|}Z(\Gamma_F)=(2d)^{m}\left(\sum_{i=0}^{t-1}x'_i n^{c-i}\right)+(2d)^{m}\sO(n^{c-t}).
\end{equation}
where the coefficients $x'_i$ are $d$-SP($m$).

Let $\Gamma'$ be the disjoint union of $\Gamma_1$ and a graph $\Gamma'_2$ which is the disjoint union of $s$ red edges. For a subset $F'$ of edges in the complement of $\Gamma'$, we define $\Gamma'_{F'}$ in the same way as $\Gamma_F$. Define ${\cal F}'$ to be the collection of those sets of edges $F'$ with endpoints in all components in $\Gamma'_2$, so all the components of $\Gamma'_2$ are merged into larger components in $\Gamma'_{F'}$. We note that for $F\in{\cal F}_s$, the graph $\Gamma_F$ is isomorphic to the disjoint union of $\Gamma'_{F'}$ for some $F'\in{\cal F}'$ and $m'-s$ disjoint red edges. Moreover,
\[
Z(\Gamma_F)=Z(\Gamma'_{F'})\,(n-\tfrac{1}{2}(d+1))^{m'-s}.
\]
Since there are $\binom{m'}{s}=\binom{m-c_0}{s}$ choices for the components of $\Gamma_2$ merged by $F\in{\cal F}_s$, we see that
\[
\sum_{F\in {\cal F}_s}(-1)^{|F|}Z(\Gamma_F)=\binom{m-c_0}{s}(n-\tfrac{1}{2}(d+1))^{m'-s}\left(\sum_{F'\in{\cal F}'}(-1)^{|F'|}Z(\Gamma'_{F'})\right).
\]
But the sum $\sum_{F'\in{\cal F}'}(-1)^{|F'|}Z(\Gamma'_{F'})$ is a polynomial in $n$ of degree at most $c$, whose coefficients are $d$-SP($m$) since $\Gamma'$ has bounded order. (Indeed, the coefficients do not, in fact, depend on $m$ at all.) Moreover, $\binom{m-c_0}{s}$ is a polynomial in $m$ since $m-c_0=m'\geq 0$. Hence \eqref{eqn:finish} holds, and so the theorem main statement of the lemma holds.

For the last statement of the lemma, we note that $Z(\Gamma_\emptyset)/(2d)^m$ is a monic polynomial when $n_0=0$, since in this case $Z(\Gamma_\emptyset)/(2d)^m=n^{c_0}(n-\frac{1}{2}(d+1))^{m'}$. Moreover, $\Gamma_F$ has fewer than $c$ components when $F$ is non-empty, so $Z(\Gamma_\emptyset)$ is a polynomial of degree strictly less than~$c$. 
\end{proof}

\subsection{A proof of Lemma~\ref{lem:final_counting}}

Recall the notation used in the statement of Lemma~\ref{lem:final_counting}. We prove this lemma as follows.

Let $\lambda=1^{u_1}2^{u_2}\cdots z^{u_z}$ be a partition of $z$ with $\rho$ parts. Let $I$ be a $z$-regular $m$-subset of $\{1,2,\ldots,n-1\}$ of type $\lambda$. Note that such subsets $I$ exist, since we are assuming that $m\geq z+\rho$. For $j\geq 1$, the subset $I$ has exactly $u_j$ runs of length $j+1$, and $m-z-\rho$ runs of length $1$. Moreover, $|I\cup(1+I)|=2m-z$.

Let $\overline{I}$ be a subset of $\{1,2,\ldots,n\}$ of cardinality $t+2m-z$ containing $I\cup(1+I)$. Define an edge-coloured graph $\Gamma$ on the vertex set $\overline{I}$ by adding a red edge between $i$ and $i+1$ for each $i\in I$. We see that $\nu_I = Z^*(\Gamma)$.

The graph $\Gamma$ has $m$ edges. It has $t$ isolated vertices, namely the vertices in $\overline{I}\setminus (I\cup (1+I))$. The induced graph on the remaining vertices is a union of disjoint red paths, with $m-z-\rho$ paths of length $1$ and $u_j$ paths of length $j+1$ for $j\geq 1$. In particular, the isomorphism class of $\Gamma$ depends only on $m$ and $\lambda$. This proves the first statement of the lemma.

Let $\Gamma_2$ be the graph consisting of the $m-z-\rho$ isolated edges, so $\Gamma_2$ has $2(m-z-\rho)$ vertices. Let $\Gamma_1$ be the graph induced on the remaining vertices. We note that $\Gamma_1$ has $n_0:=t+z+2\rho$ vertices: this is a constant, since $t$ and $\lambda$ are fixed. The graph $\Gamma$ has $c=t+m-z$ components, since it has $t$ isolated vertices, $m-z-\rho$ isolated edges, and $\rho$ disjoint paths of length $2$ or more. In particular, $c\geq t$ and so Lemma~\ref{lem:graph_count} applies. The main statement of the lemma now follows from Equation~\eqref{eqn:sp} in Lemma~\ref{lem:graph_count}, since $c\leq t+m$. Since $c=t+m-z$, the statement that $p_i^\lambda=0$ when $i<z$ also follows from~\eqref{eqn:sp}, since no power of $n$ higher than $c$ appears on the right hand side of~\eqref{eqn:sp}.

Now suppose that $z=0$, so $\lambda$ is the empty partition. In this case, $\Gamma_1$ consists of $t$ isolated points, and $\Gamma$ has exactly $c=t+m$ components. The final statement of the lemma now follows from the final statement of Lemma~\ref{lem:graph_count}.\hfill\qed

\section{Expectation and higher moments}
\label{sec:expectation}

We are now in a position to establish the asymptotic moments of $\br(\pi)$ and $\mj(\pi)$. We use the notation defined in the introduction.

\begin{proof}[Proof of Theorem~\ref{thm:expectedbreadth}]  Define $F_\ell(n)=\Pr{\br(\pi)\geq \ell}$, where $\pi\in\Sn$ is chosen uniformly, so $F_\ell(n)=\Pr{\mathbf{Y}_n\geq \ell}$. Define $F_\ell(\infty)=\Pr{\mathbf{Y}\geq \ell}$.

We claim that, for any non-negative integer $u$,
\begin{equation}
\label{eqn:breadth_moment_claim}
\sum_{d=0}^{\infty} (d+2)^uF_{d+2}(n)\rightarrow \sum_{d=0}^{\infty} (d+2)^uF_{d+2}(\infty)
\end{equation}
as $n\rightarrow\infty$. Once this claim is established, Theorem~\ref{thm:expectedbreadth} may be proved as follows. Since $\br(\pi)\geq 2$ for any permutation $\pi\in\Sn$, the $a$th moment of $\mathbf{Y}_n$ may be written as
\[
\sum_{d=0}^\infty (d+2)^a\Pr{\mathbf{Y}_n= d+2}=1+\sum_{d=0}^\infty \big((d+2)^a-(d+1)^a\big)F_{d+2}(n).
\]
Now, $(d+2)^a-(d+1)^a=\sum_{i=0}^{a-1}v_i(d+2)^i$ for some integers $v_i$. So, using our claim~\eqref{eqn:breadth_moment_claim},
\begin{align}
1+\sum_{d=0}^\infty \big((d+2)^a-(d+1)^a\big)F_{d+2}(n)&=1+\sum_{i=0}^{a-1}\left(v_i \sum_{d=0}^\infty (d+2)^iF_{d+2}(n)\right)\nonumber\\
&\rightarrow 1+\sum_{i=0}^{a-1}\left(v_i \sum_{d=0}^\infty (d+2)^iF_{d+2}(\infty)\right)\label{eqn:special_case}\\
&=\sum_{d=0}^\infty (d+2)^a\Pr{\mathbf{Y}= d+2},\nonumber
\end{align}
and this final expression is the $a$th moment of $\mathbf{Y}$. This establishes the first statement of the theorem. The final statement of the theorem follows from~\eqref{eqn:special_case} in the case when $a=1$, since $v_0=1$ in this case and since $F_{d+2}(\infty)=e^{-d^2-d}$. It remains to prove the claim.

Define $\kappa=\lceil(1/2)\sqrt{\log(n)}\rceil$, and define $\kappa'=\lceil (\frac{1}{2}u+1)\log n\rceil$. Bevan et al~\cite{cheyne:prolific} showed that when $\pi\in \Sn$ then $n\geq \lceil \br(\pi)^2/2+2\br(\pi)+1\rceil$. In particular, this shows that $\br(\pi)\leq \lceil \sqrt{2n}\,\rceil+2$. Hence
\[
\sum_{d=0}^{\infty} (d+2)^uF_{d+2}(n)=\sum_{d=0}^{\lceil \sqrt{2n}\rceil} (d+2)^uF_{d+2}(n)=s_1+s_2+s_3,
\]
where
\begin{align*}
s_1&=\sum_{d=0}^{\kappa} (d+2)^uF_{d+2}(n)\\
s_2&=\sum_{d=\kappa+1}^{\kappa'} (d+2)^uF_{d+2}(n)\text{ and}\\
s_3&=\sum_{d=\kappa'+1}^{\lceil \sqrt{2n}\rceil} (d+2)^uF_{d+2}(n).
\end{align*}
We consider these three sums separately.
A good estimate for $s_1$ follows directly from Theorem~\ref{thm:direct}:
\[
\sum_{d=0}^{\kappa} (d+2)^uF_{d+2}(n)=\left(\sum_{d=0}^\kappa (d+2)^ue^{-d^2-d}\right)+O
\left(\sum_{d=0}^\kappa (d+2)^u\frac{(\log n)^6}{n}e^{d^2+d}\right),
\]
which tends to $\sum_{d=0}^\infty (d+2)^ue^{-d^2-d}$ as $n\rightarrow\infty$. So the theorem follows once we can prove that $s_2$ and $s_3$ both tend to zero as $n\rightarrow\infty$.

Now $s_2$ is non-negative, and we see that
\[
s_2\leq \kappa' (\kappa'+2)^uF_{\kappa+2}(n)
\]
since there are at most $\kappa'$ terms in the sum, and since $F_{d+2}(n)$ is a decreasing function of $d$. But
\[
F_{\kappa+2}(n)\leq e^{-\kappa^2-\kappa}+O\left(\frac{(\log n)^6}{n}e^{\kappa^2+\kappa}\right)=o(n^{-1/8}).
\]
Thus, since $\kappa'(\kappa'+2)^u=O((\log n)^{u+1})$, we find that $s_2\rightarrow 0$ as required.

The above method is not sufficient to show that $s_3\rightarrow 0$, as there are about $\sqrt{n}$ terms in the sum $s_3$ and since either the leading term or the error term in our upper bound for $F_{d+2}(n)$ always grows faster than $n^{-1/2}$. For the range of values of $d$ we are interested in, we expect the leading term to be small and so we are willing to sacrifice the tightness of the leading term in order to reduce the error term. 

Now, $s_3\leq \sqrt{2n}(\sqrt{2n}+3)^uF_{\kappa'+2}(n)\leq (25n)^{\frac{1}{2}(u+1)}F_{\kappa'+2}(n)$. 
We note that for any permutation $\pi$ we have $\br(\pi)\leq \mj(\pi)+1$, and so $F_{d+2}(n)$ is bounded above by the probability that $\mj(\pi)\geq d+1$. Thus, by Theorem~\ref{thm:mj_prob}, there exist polynomially bounded functions $p_1(x),p_2(x),\ldots,p_{3u+3}(x)$ and $q(x)$ such that
\[
s_3\leq (25n)^{\frac{1}{2}(u+1)}\left(1+\sum_{i=1}^{3u+3}p_i(\kappa')/n^i\right) e^{-2\kappa'}+ (25n)^{\frac{1}{2}(u+1)}(q(\kappa')/n^{3u+4})e^{2\kappa'}.
\]
The right hand side of this inequality tends to $0$, since $n^{\frac{1}{2}(u+1)}e^{-2\kappa'}=o(n^{-1})$ and since $n^{\frac{1}{2}(u+1)-(3u+4)}e^{2\kappa'}=o(n^{-1})$.
Thus, since $s_3$ is non-negative, $s_3\rightarrow 0$ as required.
\end{proof}

\begin{proof}[Proof of Theorem~\ref{thm:expectedjump}]
The proof is similar to the proof of Theorem~\ref{thm:expectedbreadth}.  Define $G_\ell(n)=\Pr{\mathbf{Z}_n\geq \ell}$ and define $G_\ell(\infty)=\Pr{\mathbf{Z}\geq \ell}$. We claim that, for any non-negative integer $u$,
\begin{equation}
\label{eqn:jump_moment_claim}
\sum_{d=0}^{\infty} (d+1)^uG_{d+1}(n)\rightarrow \sum_{d=0}^{\infty} (d+1)^uG_{d+1}(\infty)
\end{equation}
as $n\rightarrow\infty$. Once this claim is proved, the theorem can be proved as follows. 
The $a$th moment of $\mathbf{Z}_n$ may be written as
\[
\sum_{d=0}^\infty (d+1)^a\Pr{\mathbf{Z}_n= d+1}=\sum_{d=0}^\infty \big((d+1)^a-d^a\big)G_{d+1}(n).
\]
Using our claim~\eqref{eqn:jump_moment_claim},
\begin{align}
\sum_{d=0}^\infty \big((d+1)^a-d^a\big)G_{d+1}(n)&=\sum_{i=0}^{a-1}\left((-1)^{a-1-i}\binom{a}{i} \sum_{d=0}^\infty (d+1)^iG_{d+1}(n)\right)\nonumber\\
&\rightarrow \sum_{i=0}^{a-1}\left((-1)^{a-1-i}\binom{a}{i} \sum_{d=0}^\infty (d+1)^iG_{d+1}(\infty)\right)\label{eqn:special_case2}\\
&=\sum_{d=0}^\infty (d+1)^a\Pr{\mathbf{Z}= d+1},\nonumber
\end{align}
which is the $a$th moment of $\mathbf{Z}$. This establishes the first statement of the theorem. The final statement of the theorem follows from~\eqref{eqn:special_case2} in the case when $a=1$. We will now prove the claim.

Set $\kappa=\lceil (u+1)\log n\rceil$. Since $1\leq\mj(\pi)\leq n$, we see that
\[
\sum_{d=0}^{\infty} (d+1)^uG_{d+1}(n)=\sum_{d=0}^{n-1}(d+1)^uG_{d+1}(n)=s_1+s_2,
\]
where
\begin{align*}
s_1&=\sum_{d=0}^\kappa(d+1)^uG_{d+1}(n) \text{ and}\\
s_2&=\sum_{d=\kappa+1}^{n-1} (d+1)^uG_{d+1}(n).
\end{align*}
Since $\kappa=O(\log n)$, Theorem~\ref{thm:mj_prob} shows that there exist polynomially bounded functions $p_1(x), p_2(x),\ldots p_{4u+3}(x)$ and $q(x)$ such that
\[
G_{d+1}(n)=\left(1+\sum_{i=1}^{4u+3} p_i(d)/n^i\right)e^{-2d}+O(q(d)e^{2d}/n^{4u+4})
\]
for $d\leq \kappa$. So
\[
s_1=\left(\sum_{d=0}^\kappa (d+1)^ue^{-2d}\right)+o(1)\rightarrow \sum_{d=0}^\infty (d+1)^u e^{-2d}
\]
as $n\rightarrow\infty$. Moreover, since $\Pr{\mj(\pi)\geq d+1}$ is a decreasing function of $d$,
\[
s_2\leq n (n+1)^u G_{\kappa+1}(n)\leq n^{u+1}(1+o(1))e^{-2\kappa}+O(n^{u+1}q(\kappa)e^{2\kappa}/n^{4u+4})=o(1).
\]
Hence the theorem follows.
\end{proof}

\section*{Acknowledgements}

The authors would like to thank the reviewers of an earlier draft for their excellent work, which has significantly improved the paper. In particular, we thank a reviewer for their suggestion to generalise to higher moments in Theorems~\ref{thm:expectedbreadth} and~\ref{thm:expectedjump}.

\end{document}